\documentclass[11pt]{amsart}
\usepackage[margin=1.05in]{geometry}
\usepackage{amsmath,amssymb,amsthm,mathtools}
\usepackage{booktabs}
\usepackage{array}
\usepackage{tikz}
\usetikzlibrary{arrows.meta,positioning,shapes.geometric}
\usepackage[colorlinks=true,linkcolor=blue,citecolor=blue,urlcolor=blue]{hyperref}
\numberwithin{equation}{section}
\newtheorem{theorem}{Theorem}[section]
\newtheorem{lemma}[theorem]{Lemma}
\newtheorem{proposition}[theorem]{Proposition}
\newtheorem{conjecture}[theorem]{Conjecture}
\newtheorem{definition}[theorem]{Definition}

\newtheorem*{proposition*}{Proposition}
\theoremstyle{remark}
\newtheorem{remark}[theorem]{Remark}
\newtheorem{example}[theorem]{Example}
\DeclareMathOperator{\Sym}{Sym}

\newcommand{\C}{\mathbb C}
\newcommand{\Q}{\mathbb Q}
\newcommand{\Z}{\mathbb Z}

\DeclareMathOperator{\Pol}{Pol}
\DeclareMathOperator{\Pl}{Pl}
\newcommand{\Dder}{\mathfrak D}
\newcommand{\NN}{\operatorname{NN}}
\newcommand{\stir}[2]{\genfrac{[}{]}{0pt}{}{#1}{#2}}

\newcommand{\StirTwo}[2]{\genfrac{\{} {\}}{0pt}{}{#1}{#2}}

\title[Positivity in classical enumerative geometry:a synchronized AI workflow]{Positivity in classical enumerative geometry: A Case Study in Synchronized AI-Assisted Mathematics}
\author{Gergely B\'erczi} 
\address{Department of Mathematics, Aarhus University}
\email{gergely.berczi@math.au.dk}
\author{L\'aszl\'o M. Feh\'er}
\address{Eötvös University Budapest and Alfréd Rényi Institute of Mathematics}
\email{lfeher63@gmail.com}
\date{April 2026}

\begin{document}

\begin{abstract}
We study the symmetric polynomial
\[
\prod_{\alpha\in A_{n,d}}\bigl(1+\alpha_1 x_1+\cdots+\alpha_n x_n\bigr),
\qquad A_{n,d}:=\{\alpha\in\Z_{\ge0}^n:|\alpha|=d\},
\]
which is the total Chern class of $\Sym^d(\C^n)$, viewed as a torus representation whose Chern roots are the weights $\alpha_1 x_1+\cdots+\alpha_n x_n$ for $\alpha\in A_{n,d}$. Its homogeneous degree-$k$ part $c_k(n,d)$ is the $k$-th Chern class of $\Sym^d(\C^n)$.  These Chern classes, together with their coefficients in various symmetric function bases, play a central role in enumerative geometry. Despite their simple definition, general closed formulas for their coefficients are subtle, and many structural properties of these classes have remained poorly understood. 

In this paper we prove several conjectures concerning their structure, establish explicit formulas, and study log-concavity properties for both the Chern classes and their $K$-theoretic analogue. In rank two, passing to the Schur basis and expanding the Schur coefficients in the binomial basis of $d$, we uncover a new binomial log-concavity phenomenon and prove refined positivity results.

The paper demonstrates a novel methodology: we combine several AI systems with human mathematical insight in a coordinated workflow, deploying each tool according to its strengths in experimental discovery, conjecture formation, symbolic proof construction, and verification. To our knowledge, this is one of the first detailed case studies of orchestrating multiple AI tools to make substantial progress on a coherent mathematical research project.
\end{abstract}

\maketitle
%\tableofcontents

\section{Introduction}

Chern classes of symmetric powers of vector bundles form a classical meeting point of intersection theory, enumerative geometry, and representation theory.  If $E$ is a rank-$n$ vector bundle with Chern roots $x_1,\ldots,x_n$, the splitting principle identifies the total Chern class of $\Sym^dE$ with
\begin{equation}\label{eq:intro-total}
  c(\Sym^dE)=
  \prod_{\substack{\alpha=(\alpha_1,\ldots,\alpha_n)\in\Z_{\ge0}^n\\|\alpha|=d}}
  \left(1+\alpha_1x_1+\cdots+\alpha_nx_n\right).
\end{equation}
We write
\[
  c(\Sym^dE)=1+c_1(n,d)+c_2(n,d)+\cdots,
\]
where $c_k(n,d)$ is the homogeneous degree $k$ part of the product.  Thus $c_k(n,d)$ is a symmetric polynomial in $x_1,\ldots,x_n$, and it can be expanded in several natural bases of the ring of symmetric functions.  The goal of this paper is twofold: first, to obtain structure theorems,  closed and recursive formulas for these classes and for related $K$-theoretic expressions; second, to study positivity and log-concavity phenomena that become visible after choosing suitable bases.

\subsection{Motivation} 
We recall two directions of related work that motivate the present paper.  The first concerns the explicit calculation of Chern classes of functorial constructions on vector bundles.  The splitting principle gives a universal algorithm: the Chern roots of $E\otimes F$, $\Sym^dE$, $\bigwedge^dE$, and more general Schur functors are obtained by applying the corresponding weight operations to the Chern roots of the input bundles.  However, converting these root formulas into compact expressions in Chern classes or symmetric-function bases is a subtle problem.  Lascoux gave formulas for Chern classes of tensor products and second symmetric and exterior powers, and Laksov--Lascoux--Thorup developed determinantal formulas for Chern and Segre classes associated with these constructions~\cite{LascouxTensor,LLT}.  Manivel's formulas for Chern classes of tensor products provide another systematic framework~\cite{ManivelChern}.  In the special case of $\Sym^2E$, the Laksov--Lascoux--Thorup determinant gives an especially elegant Schur expansion; we recall this formula below.  More recently, Chern plethysm, as developed by Billey--Rhoades--Tewari, has placed such expressions in a broader representation-theoretic and symmetric-function context~\cite{BRT}.

The top Chern class in \eqref{eq:intro-total} also has a direct enumerative interpretation.  If $S$ is the tautological bundle on a Grassmannian, then
\[
  c_{\rm top}(\Sym^dS^\vee)
\]
is the Euler class of the vector bundle whose zero scheme is the Fano scheme of linear subspaces lying on a degree-$d$ hypersurface.  In the expected zero-dimensional case, its integral gives the degree of the corresponding Fano scheme.  Such degrees have been studied by a variety of methods, including residue formulas, localization, and Schubert calculus; see, for example, Manivel~\cite{manivel1999surleshypersurfaces}, Ciliberto--Zaidenberg~\cite{ciliberto_zaidenberg2020onfanoschemes}, Feh\'er--Juh\'asz~\cite{feher2025polynomiality}, and Hiep~\cite{HiepDegree}.  The corresponding monodromy questions for finite Fano problems were studied by Harris~\cite{Harris} and, in broad generality, by Hashimoto--Kadets~\cite{HK} and Sottile--Yahl~\cite{SY}.

The second direction concerns positivity.  Two bases for symmetric polynomials play especially important roles in geometry: the elementary symmetric basis, which is the Chern-class basis, and the Schur basis, which is the natural basis of Schubert calculus.  A basic theorem of Fulton--Lazarsfeld identifies Schur polynomials as the fundamental numerically positive polynomials in the Chern classes of ample vector bundles~\cite{FultonLazarsfeld}.  This is one reason Schur positivity is often a stronger and more geometric statement than positivity in an arbitrary monomial basis.  In singularity theory, Thom polynomials provide a parallel source of positivity questions.  Rim\'anyi \cite{RimanyiThom} conjectured positivity properties for Thom polynomials of Morin singularities when written in the relative Chern classes; B\'erczi's work on Morin singularities discusses this conjecture and related strengthened positivity phenomena~\cite{BercziMorin}.  On the other hand, Pragacz and Weber proved broad Schur-positivity results for Thom polynomials of stable singularities, using the Fulton--Lazarsfeld theory together with Kazarian's classifying-space approach~\cite{PragaczWeber}.  Pragacz and collaborators also showed that Schur expansions often reveal recurrences and structural features that are difficult to see in the Chern-monomial basis~\cite{PragaczThomSchur,LascouxPragaczA3}.

Positivity is frequently accompanied by unimodality or log-concavity phenomena.  These properties are central in algebraic combinatorics and algebraic geometry: classical examples include unimodality of Hilbert functions and $h$-vectors, log-concavity conjectures for matroids, and the Hodge-theoretic proofs of the Rota--Heron--Welsh conjecture and related Mason-type conjectures~\cite{StanleyLogConcavity,Huh,AHK,BrandenHuh}.  The modern theory of Lorentzian and Hodge-theoretic polynomials gives a conceptual explanation for many such inequalities.  In this paper, the same philosophy leads us from closed formulas for the Chern classes $c_k(n,d)$ to refined positivity questions for their Schur coefficients.  

\subsection{Results} The first part of the paper concerns polynomiality in the two parameters
$n$ and $d$.  We study the weak polynomiality conjecture and a
stronger binomial-form conjecture from \cite{feher2025polynomiality} for the elementary-basis coefficients from 
\[
  c_k(n,d)=\sum_{\lambda\vdash k} f_\lambda(n,d)e_\lambda,
  \qquad e_\lambda=e_{\lambda_1}\cdots e_{\lambda_\ell}.
\]
The main result in this direction is Theorem~\ref{thm:strong-binomiality},
which proves more than the conjectural binomial form: for each fixed $k$,
all coefficients $f_\lambda(n,d)$ lie in the polynomial ring
\[
  \Q[T_1(n,d),\ldots,T_k(n,d)],
  \qquad
  T_r(n,d):=\binom{d+n-1}{n+r-1}.
\]
Thus the $k$ distinguished binomial coefficients $T_1,\ldots,T_k$
already generate all elementary-basis coefficients of $c_k(n,d)$.  The
proof reduces the Chern-root power sums to factorial moments of weak
compositions, using Lemma~\ref{lem:factorial-moments}, and then applies
Newton identities.

We then record several consequences and variants of this binomiality
theorem.  Proposition~\ref{prop:c2} gives the first nontrivial closed
formula, namely $c_2(n,d)$, and Example~\ref{ex:small-k-cknd} gives the
first universal expressions in the variables $T_r$.  The
Laksov--Lascoux--Thorup determinant for $d=2$ is recalled in
Theorem~\ref{thm:d2det}.  We also prove a multiplicative, $K$-theoretic
analogue: the normalized product $\NN_{n,d}$ satisfies
\[
  [z^k\zeta^k]\NN_{n,d}=c_k(n,d)
\]
by Proposition~\ref{prop:NN-diagonal}, and Theorem~\ref{thm:NN-binomiality}
shows that all coefficients $[z^q\zeta^m]\NN_{n,d}$ have the same
binomial dependence on $n$ and $d$.  Finally, we show leading-term
consequences in the elementary basis and rank-two Euler-class formulas for
the top Chern class.

The second part of the paper studies a refined rank-two positivity
phenomenon.  Write
\[
  c_k(2,d)
  =
  \sum_{0\le j\le \lfloor k/2\rfloor}
  A_{k,j}(d)\,s_{(k-j,j)}(x_1,x_2),
\]
and expand each Schur coefficient in the binomial basis of $d$:
\[
  A_{k,j}(d)=\sum_r B_{k,j,r}\binom dr.
\]
Conjecture~\ref{conj:A} asserts binomial positivity,
$B_{k,j,r}\ge0$, and Conjecture~\ref{conj:B} asserts binomial
log-concavity in the index $r$.  We prove first the general implication
that binomial positivity together with binomial log-concavity implies
ordinary log-concavity of the value sequence; this is
Theorem~\ref{thm:binomial-lc-implies-lc}.  We then prove the initial
rank-two cases: Theorem~\ref{thm:rank-two-known-A} proves positivity for
$j=0,1$, and Theorem~\ref{thm:rank-two-known-B} proves log-concavity for
$j=0$, using fixed-point-free permutations with prescribed cycle defect.

The first cases not covered by these initial arguments are settled in
Section~\ref{sec:detective-proofs}.  Theorem~\ref{thm:A1-solved} proves
Conjecture~\ref{conj:A} for $j=2$, that is,
\[
  B_{k,2,r}\ge0
  \qquad\text{for all }k,r.
\]
Theorem~\ref{thm:B1-solved} proves Conjecture~\ref{conj:B} for $j=1$,
namely
\[
  B_{k,1,r}^2\ge B_{k,1,r-1}B_{k,1,r+1}
  \qquad\text{for all }k,r.
\]
Thus the paper proves binomial positivity for the first three rank-two
Schur modes $j=0,1,2$, and binomial log-concavity for the first two modes
$j=0,1$.  The general higher-mode forms of
Conjectures~\ref{conj:A} and~\ref{conj:B} remain open.

The final section moves beyond symmetric powers and formulates an analogous shifted-binomial log-concavity problem for Pl\"ucker coefficients of coincident root strata; unlike the rank-two Chern-class results, this Pl\"ucker extension is presented as a conjectural direction together with a proof strategy and a precise obstruction. This is a
classical enumerative problem, going back to Pl\"ucker's work in the
1830s. Let $\lambda=(2^{e_2},3^{e_3},\ldots,m^{e_m})$
be a partition of $n$ without parts equal to $1$.  Equivalently, after choosing
an ordering of its parts, we write $\lambda=(\lambda_1,\ldots,\lambda_\ell)$ where $\ell=\sum_{q\ge2}e_q$.  
We also use the partition
\[
  \tilde\lambda
  :=
  (\lambda_1-1,\ldots,\lambda_\ell-1)
  =
  (1^{e_2},2^{e_3},\ldots,(m-1)^{e_m}),
\]
The number $|\tilde\lambda|$ is the total tangency codimension imposed by the prescribed multiplicities.

Choose $n_0$ and an admissible integer $i$ with
$0\le i\le |\tilde\lambda|$ and $|\tilde\lambda|+i=2(n_0-2)$.
For $d\ge |\lambda|$, the Pl\"ucker number
$\Pl_{\lambda;i}(d)$ is the number of lines
$L\subset \mathbb P(\mathbb C^{n_0})$ satisfying the following conditions
with respect to a general degree $d$ hypersurface $X$.  The intersection
divisor $X\cap L$ has $\ell$ distinct points at which the intersection
multiplicities are prescribed by
$\lambda_1,\ldots,\lambda_\ell$, and the remaining $d-|\lambda|$
intersection points are simple.  In addition, $L$ is required to meet a
general projective subspace of codimension $i+1$.  The dimension condition
above makes the expected number of such lines finite.
For example, the classical formulas
\[
  \Pl_{(2,2);0}(d)
  =
  \frac12 d(d-2)(d-3)(d+3),
  \qquad
  \Pl_{(3);0}(d)
  =
  3d(d-2),
\]
give, respectively, the number of bitangent lines and flex lines to a
general degree $d$ plane curve.

In \cite{feher2023plucker} it is shown that the Pl\"ucker numbers are
polynomials in $d$, and several structural properties of these
polynomials are studied.  They can be computed from equivariant cohomology
classes of coincident root strata.  Let $\Pol^d(\mathbb C^2)$ denote the
space of binary forms of degree $d$.  For $d\ge |\lambda|$, define
%\[
%  Y_\lambda(d)
%  :=
%  \left\{
%  f\in \Pol^d(\mathbb C^2)
%  :
%  f=
%  \prod_{a=1}^{\ell} L_a^{\lambda_a}
%  \prod_{b=1}^{d-|\lambda|} M_b
%  \right\},
%\]
%where $L_a,M_b:\mathbb C^2\to\mathbb C$ are nonzero linear forms and all
%of them are pairwise non-proportional.  Thus
$Y_\lambda(d)$ to be the set of 
binary forms whose roots have exactly the prescribed multiplicities
$\lambda_1,\ldots,\lambda_\ell$, with all remaining roots simple.  Its
closure $\overline Y_\lambda(d)$ allows further collisions of roots. If $M=|\tilde\lambda|$, then the equivariant cohomology class of this
coincident root stratum has the Schur expansion
\begin{equation}\label{plucker}
    [\overline Y_\lambda(d)]
  =
  \sum_{|\tilde\lambda|/2 \le i \le |\tilde\lambda|}P_{\lambda;i}(d)\,s_{i,|\tilde\lambda|-i},
\end{equation}
Thus the Pl\"ucker polynomials appear as the Schur coefficients of
$\bigl[\overline Y_\lambda(d)\bigr]$.
We shift to the natural threshold $N=|\lambda|$, set
\[
  Q_{\lambda;i}(x):=P_{\lambda;i}(N+x),
\]
and write
\[
  Q_{\lambda;i}(x)=\sum_{r\ge0} C_{\lambda;i,r}\binom xr.
\]
Conjecture~\ref{conj:shifted-plucker-binomial-lc} predicts that the
coefficient sequence $(C_{\lambda;i,r})_r$ is nonnegative and
log-concave for every admissible pair $(\lambda,i)$.  By
Theorem~\ref{thm:binomial-lc-implies-lc}, this implies the weaker shifted
value log-concavity stated as
Conjecture~\ref{conj:shifted-plucker-value-lc}.  We give several small
examples and a proof strategy based on the recursive formula for
$[\overline Y_\lambda(d)]$; the proof is not complete, because the
missing step is a positive transition formula strong enough to preserve
log-concavity in the shifted binomial basis.

Subsection~\ref{subsec:csm-root-strata} further extends this viewpoint to equivariant
Chern--Schwartz--MacPherson classes of open coincident root strata, suggesting
that the shifted-binomial positivity and log-concavity phenomena may belong to
a broader theory of characteristic classes of root-collision strata.

\subsection{Methodology: orchestrating AI tools and human insight} The paper is a case study in orchestrating AI tools with human mathematical insight.  By \emph{orchestration} we mean that we, the human mathematicians, did not treat one system as a monolithic black box; rather, different systems were assigned different mathematical roles, and their outputs were repeatedly translated into conjectures, bases, identities, and proof obligations. The AI tools we used are the following. 
\begin{enumerate}
    \item AlphaEvolve \cite{AlphaEvolve} by Google DeepMind, which is an evaluator-guided evolutionary coding agent designed to improve candidate programs against exact tests; in this project it served first as a formula-finding engine and later as a detective for structural recurrences.  
    \item  ChatGPT 5.5 Pro was used as an interactive mathematical assistant for symbolic derivations and conjecture refinement. It produced closed formulas for the Chern classes $c_k(n,d)$, leading to a proof of a strong form of the polynomiality conjecture. It also supplied the proof of the $j=0$ case of the rank-two binomial log-concavity conjecture.  
    \item Google DeepMind's Co-Mathematician agent \cite{co-math} was used in the final stage of the project; it successfully provided proofs of the rank-two positivity for $j=2$ and log-concavity theorem for $j=1$ which were not available for GPT 5.5 Pro.
\end{enumerate}

The chronology of the project was as follows.  AlphaEvolve first suggested
the closed formula for $c_2(n,d)$ in Proposition~\ref{prop:c2}, together
with structural patterns in the cases $k=3,4$.  These patterns were then
used as input for ChatGPT~5.5 Pro, which provided the factorial-moment
identity of Lemma~\ref{lem:factorial-moments} and the proof of the strong
binomiality Theorem \ref{thm:strong-binomiality}. This settled the Feh\'er--Juh\'asz binomial-form
conjecture for the coefficients of $c_k(n,d)$, in a stronger form than
originally predicted.  ChatGPT~5.5 Pro also proved the parallel
$K$-theoretic analogue Theorem \ref{thm:NN-binomiality}, whose diagonal coefficients recover the
same Chern classes.

The next step was human: in rank two we changed basis from the Schur
coefficients $A_{k,j}(d)$ to their binomial-basis coefficients
$B_{k,j,r}$, and formulated the binomial positivity and binomial
log-concavity conjectures, Conjectures~\ref{conj:A} and~\ref{conj:B},
based on exact computations.  ChatGPT~5.5 Pro proved
Conjecture~\ref{conj:A} for $j=0,1$, and
Conjecture~\ref{conj:B} for $j=0$.  It did not prove the first remaining
cases, namely Conjecture~\ref{conj:A} for $j=2$ and
Conjecture~\ref{conj:B} for $j=1$.  In the latter case, the stronger
real-rootedness statement is false, so a different proof strategy was
needed.

A series of AlphaEvolve experiments, described in
Section~\ref{sec:detective}, then narrowed the search space for
Conjecture~\ref{conj:A}.  These experiments progressively isolated the
correct local recurrence, the appropriate residual terms, and finally the
lower-mode operator basis.

Co-Mathematician then produced proof drafts for the two remaining first
cases.  After verification and rewriting, these became the proofs of
Conjecture~\ref{conj:A} for $j=2$ and Conjecture~\ref{conj:B} for
$j=1$ explained in Section~\ref{sec:detective-proofs}.

We also investigated the analogous shifted-binomial log-concavity
conjecture for Pl\"ucker coefficients.  Those attempts did not lead to a
complete proof.  The ideas produced by Co-Mathematician are collected in
Section~\ref{sec:plucker-shifted-binomial-lc} as a proof strategy and as a
record of the present obstruction.

Thus the project benefited from a division of work among systems with
different strengths: conjecture formation, recurrence discovery, symbolic
derivation, proof drafting, and finite verification.  The methodological
lesson is that AI tools can be used as a research ensemble, while the human
mathematician supplies the problem selection, the changes of perspective,
and the standards of proof.

\begin{figure}[ht]
\centering
\begin{tikzpicture}[
  node distance=9mm and 8mm,
  box/.style={draw, rounded corners, align=center, minimum width=31mm, minimum height=11mm, font=\scriptsize},
  arrow/.style={-Latex, thick}
]
\node[box] (ae1) {AlphaEvolve\\ closed form for\\ $c_2(n,d)$};
\node[box, right=of ae1] (gpt) {GPT Pro  5.5\\ proof of \\ strong binomiality,\\ $K$-theory analogue};
\node[box, right=of gpt] (human) {Human insight:\\ change to\\ binomial basis;\\ formulate log-concavity \\ conjectures};
\node[box, below=of human] (ae2) {AlphaEvolve\\ detective searches\\ structural discoveries};
\node[box, left=of ae2] (comath) {GPT Pro  5.5\\ proof of \\ log-concavity for $j=1$};
\node[box, left=of comath] (out) {Co-Mathematician\\ proof of \\ log-concavity for $j=2$ };

\draw[arrow] (ae1) -- (gpt);
\draw[arrow] (gpt) -- (human);
\draw[arrow] (human) -- (ae2);
\draw[arrow] (ae2) -- (comath);
\draw[arrow] (comath) -- (out);
\end{tikzpicture}
\caption{The proof pipeline recorded in this paper.  The mathematician acts as a conductor: each AI tool contributes a distinct mathematical instrument.}
\label{fig:pipeline}
\end{figure}

Several mathematical questions remain. Higher-$j$ versions of the rank-two log-concavity theorem remain unresolved.  And although Theorem~\ref{thm:A1-solved} gives a native binomial-basis proof of positivity, a direct positive combinatorial interpretation of the coefficients $B_{k,2,r}$ is still open.

\subsection*{Acknowledgements}

G.B. thanks the Co-Mathematician and AlphaEvolve teams for providing early
access to their models, and for their valuable advice, support, and feedback
throughout this project. G.B. was supported by DFF 40296 grant of the Danish Independent Research
Fund.

\section{A binomiality theorem for symmetric powers}
This section describes the first mathematical phase of the workflow.  The starting point was the closed formula for $c_2(n,d)$ in Proposition~\ref{prop:c2}, first suggested experimentally by AlphaEvolve.  That formula exhibited exactly the binomial structure predicted by Feh\'er--Juh\'asz and led to the search for a general pattern for $c_3(n,d)$ and $c_4(n,d)$.  These patterns were then used as input for ChatGPT~5.5 Pro, which provided the factorial-moment
identity of Lemma~\ref{lem:factorial-moments} and the proof of the strong
binomiality Theorem \ref{thm:strong-binomiality}. The proof converts sums over weak compositions into binomial coefficients, and hence turns the Chern-root power sums into polynomials in the quantities $T_r(n,d)$. We then study the K-theory analogue where GPT Pro~5.5 successfully generalised Theorem \ref{thm:strong-binomiality} to the closed formula of Theorem \ref{thm:NN-binomiality}, whose diagonal coefficients recover the original Chern classes.

Let
\[
  A_{n,d}:=\{\alpha\in\Z_{\ge0}^n:|\alpha|=d\},\qquad
  y_\alpha:=\alpha_1x_1+\cdots+\alpha_nx_n.
\]
Then recall from the Introduction the Chern classes
\[
  \prod_{\alpha\in A_{n,d}}(1+y_\alpha)=\sum_{k\ge0}c_k(n,d).
\]
A two-parameter expression need not be polynomial in both parameters (also called separately polynomial function), even if it has a simple closed form.  For example, $d^n$ is a polynomial in $d$ for every fixed $n$, but after setting $d=2$ one obtains $2^n$, which is not a polynomial in $n$.  Thus polynomiality in the rank $n$ and the degree $d$ is already a genuine structural constraint.

\begin{conjecture}[Separate polynomiality, \cite{feher2025polynomiality}]\label{conj:polynomiality}
For each fixed $k$ and each partition $\lambda\vdash k$, the coefficient $f_\lambda(n,d)$ in
\[
  c_k(n,d)=\sum_{\lambda\vdash k}f_\lambda(n,d)e_\lambda
\]
is polynomial in $d$ for fixed $n$, and polynomial in $n$ for fixed $d$.
\end{conjecture}

The following conjecture is a stronger form of this polynomiality statement: it predicts not only polynomial dependence on the parameters separately, but polynomial dependence through a distinguished family of binomial coefficients.

\begin{conjecture}[Strong binomial form, \cite{feher2025polynomiality}]\label{conj:binomial-form}
For each fixed $k$ and each partition $\lambda\vdash k$, the coefficient $f_\lambda(n,d)$ in
\[
  c_k(n,d)=\sum_{\lambda\vdash k}f_\lambda(n,d)e_\lambda
\]
is a polynomial in finitely many binomial coefficients of the form $\binom{d+n+a}{n+b}$.
\end{conjecture}

The first nontrivial low-degree case already exhibits the special binomial structure.

\begin{proposition}[Closed formula for $c_2(n,d)$]\label{prop:c2}
One has
\[
  c_2(n,d)=\binom{d+n}{n+1}e_2+
  \left(
    \frac12\binom{d+n-1}{n}^2
    -\frac12\binom{d+n-1}{n}
    -\binom{d+n-1}{n+1}
  \right)e_1^2.
\]
This formula was  found experimentally by AlphaEvolve, recovering the result of \cite{ciliberto_zaidenberg2020onfanoschemes}.
\end{proposition}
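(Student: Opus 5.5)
We compute $c_2(n,d)$ through power sums of the Chern roots. The roots are the $y_\alpha$, and we write $p_m:=\sum_{\alpha\in A_{n,d}}y_\alpha^m$. Newton's identities then give
\[
c_1(n,d)=p_1,\qquad c_2(n,d)=\tfrac12\bigl(p_1^2-p_2\bigr).
\]
So it suffices to express $p_1$ and $p_2$ in terms of $e_1$ and $e_2$. By symmetry of $A_{n,d}$ under permuting coordinates we have
\[
p_1=\Bigl(\sum_{\alpha}\alpha_1\Bigr)e_1,
\]
and
\[
p_2=\Bigl(\sum_\alpha\alpha_1^2\Bigr)\sum_i x_i^2+2\Bigl(\sum_\alpha\alpha_1\alpha_2\Bigr)e_2 .
\]
Substituting $\sum_i x_i^2=e_1^2-2e_2$, the problem reduces to three moment sums over weak compositions of $d$ into $n$ parts.

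For these sums we use factorial moments. The sum $\sum_\alpha\binom{\alpha_1}{a}\binom{\alpha_2}{b}$ counts weak compositions of $d$ into $n+a+b$ parts. Splitting $\alpha_1$ into $a+1$ parts and $\alpha_2$ into $b+1$ parts, a stars-and-bars count gives
\[
\sum_\alpha\binom{\alpha_1}{a}\binom{\alpha_2}{b}=\binom{d+n-1}{n+a+b-1}.
\]
This yields:
\begin{itemize}
\item[(i)] $\sum_\alpha\alpha_1=\binom{d+n-1}{n}=T_1$;
\item[(ii)] $\sum_\alpha\alpha_1\alpha_2=\binom{d+n-1}{n+1}=T_2$;
\item[(iii)] $\sum_\alpha\alpha_1^2=\sum_\alpha\bigl(2\binom{\alpha_1}{2}+\alpha_1\bigr)=2T_2+T_1$.
\end{itemize}
Consequently
\[
p_2=(2T_2+T_1)e_1^2-2(2T_2+T_1)e_2+2T_2e_2=(2T_2+T_1)e_1^2-(2T_2+2T_1)e_2 .
\]

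Combining with $p_1=T_1e_1$, we obtain
\[
c_2(n,d)=\tfrac12\bigl(T_1^2-2T_2-T_1\bigr)e_1^2+(T_1+T_2)e_2 .
\]
The $e_1^2$ coefficient is already in the stated form. For the $e_2$ coefficient, Pascal's rule gives $T_1+T_2=\binom{d+n-1}{n}+\binom{d+n-1}{n+1}=\binom{d+n}{n+1}$, as claimed.

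The only step needing care is the factorial-moment count. We would verify it by the generating function $\sum_{\alpha_1}\binom{\alpha_1}{a}t^{\alpha_1}=t^a(1-t)^{-a-1}$, so the sum is the coefficient $[t^{d-a-b}](1-t)^{-(n+a+b)}$. We would also sanity-check the final formula at $n=1$, where $c_2(1,d)=0$.
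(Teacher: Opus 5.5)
Your proof is correct and is essentially the paper's: both apply Newton's identity $2c_2=p_1^2-p_2$ with $p_1=T_1e_1$ and $p_2=(2T_2+T_1)e_1^2-2(T_1+T_2)e_2$ (the paper writes $v=\binom{d+n}{n+1}=T_1+T_2$ from the outset), and your factorial-moment computation is exactly the ``direct generating-function calculation'' the paper leaves implicit and later formalizes as Lemma~\ref{lem:factorial-moments}. One small slip in wording: $\sum_\alpha\binom{\alpha_1}{a}\binom{\alpha_2}{b}$ counts weak compositions of $d-a-b$ (not $d$) into $n+a+b$ parts, though the binomial you display and its generating-function check are correct.
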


\begin{proof}
Let
\[
  u=\binom{d+n-1}{n},\qquad v=\binom{d+n}{n+1}.
\]
For $m\ge1$ put
\[
  P_m(n,d):=\sum_{\alpha\in A_{n,d}}y_\alpha^m.
\]

A direct generating-function calculation gives
\[
  P_1(n,d)=\sum_{\alpha\in A_{n,d}}y_\alpha=u\,e_1
\]
and
\[
  P_2(n,d)=\sum_{\alpha\in A_{n,d}}y_\alpha^2=(2v-u)e_1^2-2v\,e_2.
\]
Newton's identity $2c_2=P_1^2-P_2$ now yields the formula.
\end{proof}

It turns out that the much stronger version of Conjecture \ref{conj:binomial-form} holds: the coefficients of the Chern classes are polynomials of an explicit short list of binomial coefficients $T_r(n,d)$ below. 

\begin{theorem}[Strong binomiality]\label{thm:strong-binomiality}
For every $k\ge0$, there is a universal polynomial
\[
  Q_k\in\Q[u_1,\ldots,u_k,z_1,\ldots,z_k]
\]
such that, for all $n,d$,
\[
  c_k(n,d)=Q_k\bigl(T_1(n,d),\ldots,T_k(n,d);e_1,\ldots,e_k\bigr),
\]
where
\[
  T_r(n,d):=\binom{d+n-1}{n+r-1},\qquad 1\le r\le k.
\]
Equivalently, $f_\lambda(n,d)\in\Q[T_1(n,d),\ldots,T_k(n,d)]$ for every $\lambda\vdash k$.  In particular, Conjecture \ref{conj:binomial-form} holds.
\end{theorem}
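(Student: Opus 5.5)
The plan is to follow the route already used for Proposition~\ref{prop:c2}: compute the Chern-root power sums
\[
  P_m(n,d)=\sum_{\alpha\in A_{n,d}}y_\alpha^m
\]
in terms of the $T_r$ and the monomial symmetric functions. Then I will apply Newton's identities. These express $k!\,c_k$ as a universal polynomial with integer coefficients in $P_1,\ldots,P_k$. Hence it suffices to show that, for each $m\le k$, $P_m(n,d)$ is a polynomial in $T_1,\ldots,T_m$ whose coefficients are universal symmetric polynomials in $x$ of degree $m$. Any such polynomial can then be rewritten in $e_1,\ldots,e_k$, which gives $Q_k$.

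To compute $P_m$, I expand $y_\alpha^m$ by the multinomial theorem:
\[
  P_m=\sum_{\mu}\binom{m}{\mu}x^\mu\sum_{\alpha\in A_{n,d}}\alpha^\mu ,
\]
where $\mu$ ranges over weak compositions of $m$. Next I write each power $\alpha_i^{\mu_i}$ in the falling-factorial basis using Stirling numbers of the second kind: $a^{p}=\sum_{q}\StirTwo{p}{q}(a)_q$. This reduces everything to factorial moments
\[
  M_\beta(n,d):=\sum_{\alpha\in A_{n,d}}\prod_i(\alpha_i)_{\beta_i}.
\]
These are the content of Lemma~\ref{lem:factorial-moments}, which I would prove by generating functions. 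We have $\sum_{a}(a)_q t^a=q!\,t^q(1-t)^{-q-1}$, so taking the product over $i$ gives
\[
  \sum_d M_\beta(n,d)t^d=\prod_i\beta_i!\;t^{|\beta|}(1-t)^{-n-|\beta|}.
\]
Therefore
\[
  M_\beta(n,d)=\beta!\binom{d+n-1}{n+|\beta|-1}=\beta!\,T_{|\beta|}(n,d).
\]
The key point is that the moment depends on $\beta$ only through $|\beta|$ and the factor $\beta!$. Since $|\beta|\le m$, it follows that
\[
  P_m=\sum_{s=1}^{m}T_s(n,d)\,R_{m,s}(x),
\]
where
\[
  R_{m,s}(x)=\sum_{\mu}\binom m\mu x^\mu\sum_{\beta\le\mu,\ |\beta|=s}\prod_i \StirTwo{\mu_i}{\beta_i}\beta_i!.
\]
Here $T_0$ does not occur because $m\ge1$ forces $|\beta|\ge1$ whenever $\mu\ne0$. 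This gives the linear dependence on the $T_s$, and hence polynomial dependence after applying Newton's identities.

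The main obstacle is making sure that $R_{m,s}(x)$ is a universal symmetric function independent of $n$. As written it is a sum over $x^\mu$ with $\mu\in\Z_{\ge0}^n$. Grouping the terms by the partition type of the support of $\mu$, one gets $R_{m,s}=\sum_{\nu\vdash m}c_{\nu,s}\,m_\nu(x_1,\ldots,x_n)$. The coefficients $c_{\nu,s}$ depend only on the nonzero parts, because zero entries contribute the factor $\StirTwo{0}{0}0!=1$. Since monomial symmetric functions are compatible with changing $n$, including $m_\nu=0$ when $\ell(\nu)>n$, the resulting $e$-expansion is universal. Finally I would check the base case against Proposition~\ref{prop:c2} ($m=1,2$) as a sanity test. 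Passing from the $T_s$ to arbitrary $f_\lambda$ is then immediate, and the particular instance $\binom{d+n+a}{n+b}$ with $a=-1$, $b=r-1$ gives Conjecture~\ref{conj:binomial-form}.
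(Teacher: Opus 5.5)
Your proposal is correct and follows essentially the same route as the paper: you obtain the Chern-root power sums from the multinomial expansion, pass to falling factorials via Stirling numbers of the second kind, and use the factorial-moment lemma $M_\beta(n,d)=\beta!\,T_{|\beta|}(n,d)$ before applying Newton's identities. Your explicit check that the monomial-symmetric coefficients are independent of $n$ (so that the $e$-expansion is universal) is a detail the paper leaves implicit.
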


\begin{lemma}[Factorial moments of weak compositions]\label{lem:factorial-moments}
Let $i_1,\ldots,i_s$ be distinct indices and let $m_1,\ldots,m_s$ be positive integers.  Put $M=m_1+\cdots+m_s$ and write $a^{\underline m}=a(a-1)\cdots(a-m+1)$.  Then
\begin{equation}\label{eq:factorial-moments}
  \sum_{\alpha\in A_{n,d}}\prod_{j=1}^s \alpha_{i_j}^{\underline {m_j}}
  =
  \left(\prod_{j=1}^s m_j!\right)\binom{d+n-1}{n+M-1}.
\end{equation}
\end{lemma}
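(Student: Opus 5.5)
The plan is to use generating functions. The set $A_{n,d}$ is the coefficient of $t^d$ in $\prod_{i=1}^n (1-t)^{-1}$, and factorial moments are produced by differentiation. Concretely, I will introduce separate variables $t_1,\dots,t_n$ and consider
\[
  F(t_1,\ldots,t_n)=\prod_{i=1}^n\frac{1}{1-t_i}=\sum_{\alpha\in\Z_{\ge0}^n} t_1^{\alpha_1}\cdots t_n^{\alpha_n}.
\]
For each $j$, I apply the operator $t_{i_j}^{m_j}\partial_{t_{i_j}}^{m_j}$. This multiplies the monomial $t^\alpha$ by $\alpha_{i_j}^{\underline{m_j}}$. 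Since the indices $i_j$ are distinct, these operators commute and act on different factors. Afterwards I specialize all $t_i=t$ and extract $[t^d]$. The resulting coefficient is exactly the left-hand side of \eqref{eq:factorial-moments}.

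On the product side, each factor transforms independently. One has
\[
  t^{m}\partial_t^{m}(1-t)^{-1}=m!\,t^{m}(1-t)^{-m-1},
\]
and the untouched factors contribute $(1-t)^{-1}$ each. After specialization the product becomes
\[
  \Bigl(\prod_j m_j!\Bigr)\, t^{M}(1-t)^{-(M+s)-(n-s)}=\Bigl(\prod_j m_j!\Bigr)\,t^M(1-t)^{-(n+M)}.
\]
Extracting the coefficient of $t^d$ gives
\[
  \Bigl(\prod_j m_j!\Bigr)\binom{d-M+n+M-1}{n+M-1}=\Bigl(\prod_j m_j!\Bigr)\binom{d+n-1}{n+M-1},
\]
which is the claim.

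I will also check the edge cases. When $d<M$, both sides vanish: every term on the left contains a factor $\alpha_{i_j}^{\underline{m_j}}=0$ because $\sum_j \alpha_{i_j}\le d<M$, and the binomial coefficient on the right is zero by the usual convention.

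As a cross-check, I would also give a combinatorial reading. The quantity $\alpha_i^{\underline m}/m!=\binom{\alpha_i}{m}$ counts ways of choosing $m$ marked balls in box $i$. Summing over all $\alpha$ then counts placements of $d$ balls into $n$ boxes with $M$ distinguished marked balls distributed as prescribed. This is equivalent to a stars-and-bars count of $d-M$ free balls into $n+M$ slots, since each marked ball acts as an extra divider.

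I expect no real obstacle here. The only point needing care is the bookkeeping of the exponent $-(n+M)$ and the shift $t^M$, which is what turns the top entry $d$ into $d+n-1$ and the bottom entry into $n+M-1$.
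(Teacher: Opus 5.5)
Your proof is correct and is essentially the paper's argument: the paper also passes to the generating function in $d$. There each distinguished coordinate contributes $m_j!\,t^{m_j}(1-t)^{-(m_j+1)}$ and the remaining $n-s$ coordinates contribute $(1-t)^{-1}$ each, giving $\bigl(\prod_j m_j!\bigr)t^M(1-t)^{-(n+M)}$ and hence the stated binomial coefficient. Your operators $t_{i_j}^{m_j}\partial_{t_{i_j}}^{m_j}$ make explicit the per-coordinate factor that the paper states without derivation, and your stars-and-bars reading is a valid combinatorial confirmation of the same identity.
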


\begin{proof}
Taking generating functions in $d$ gives
\[
  \sum_{d\ge0}\left(\sum_{\alpha\in A_{n,d}}\prod_{j=1}^s \alpha_{i_j}^{\underline {m_j}}\right)t^d
  =
  \left(\prod_{j=1}^s\frac{m_j!t^{m_j}}{(1-t)^{m_j+1}}\right)(1-t)^{-(n-s)}.
\]
This is
\[
  \left(\prod_{j=1}^s m_j!\right)t^M(1-t)^{-(n+M)}.
\]
Taking the coefficient of $t^d$ proves the lemma.
\end{proof}

\begin{proof}[Proof of Theorem \ref{thm:strong-binomiality}]
Fix $m\ge1$.  We first show that
\begin{equation}\label{eq:P-m-in-ring}
  P_m(n,d)\in \Z[T_1(n,d),\ldots,T_m(n,d)]\otimes_{\Z}\Lambda_m,
\end{equation}
where $\Lambda_m$ denotes homogeneous symmetric functions of degree $m$.

Let $\mu=(\mu_1,\ldots,\mu_s)\vdash m$.  The coefficient of a monomial of type $\mu$ in $P_m$ is a multinomial factor times
\[
  M_\mu(n,d):=\sum_{\alpha\in A_{n,d}}\alpha_1^{\mu_1}\cdots\alpha_s^{\mu_s}.
\]
Using Stirling numbers of the second kind,
\[
  a^q=\sum_{r=1}^{q}\StirTwo{q}{r}a^{\underline r},
\]
and applying Lemma \ref{lem:factorial-moments}, we obtain
\begin{equation}\label{eq:mixed-moment}
  M_\mu(n,d)=
  \sum_{1\le r_j\le \mu_j}
  \left(\prod_{j=1}^s \StirTwo{\mu_j}{r_j}r_j!\right)
  T_{r_1+\cdots+r_s}(n,d).
\end{equation}
Thus each monomial-symmetric coefficient of $P_m$ lies in $\Z[T_1,\ldots,T_m]$.  Since the elementary products form a basis in each degree over $\Q$, $P_m$ can be written universally as a polynomial in $T_1,\ldots,T_m$ and $e_1,\ldots,e_m$.

Newton identities for the roots $y_\alpha$ give
\[
  k c_k=\sum_{j=1}^{k}(-1)^{j-1}c_{k-j}P_j,\qquad c_0=1.
\]
By induction on $k$, the right hand side lies in $\Q[T_1,\ldots,T_k][e_1,\ldots,e_k]$.  Dividing by $k$ proves the theorem.
\end{proof}
\begin{example} \label{ex:small-k-cknd} For small $k$ we have
\[\begin{aligned}
c_1(n,d)&=T_1e_1,\\[3pt]
c_2(n,d)&=
\frac{T_1^2-T_1-2T_2}{2}e_{1}^2
+(T_1+T_2)e_2,\\[3pt]
c_3(n,d)&=
\frac{T_1^3-3T_1^2-6T_1T_2+2T_1+12T_2+12T_3}{6}e_{1}^3\\
&\quad+
\left(T_1^2+T_1T_2-T_1-5T_2-4T_3\right)e_1e_2\\
&\quad+
\left(T_1+3T_2+2T_3\right)e_3.
\end{aligned}\]
\end{example}
\medskip
\begin{remark} \label{strong-binom4schur}
Theorem\ref{thm:strong-binomiality} immediately implies that the Schur coefficents of $c_k(n,d)$ are also polynomials in the binomial expressions $T_r$.
\end{remark}

\subsection{The case of $d=2$: the Laksov--Lascoux--Thorup formula}
\smallskip

For $d=2$,
\[
  C_n(x):=c(\Sym^2\C^n)=\prod_{i=1}^n(1+2x_i)\prod_{1\le i<j\le n}(1+x_i+x_j).
\]

The case of $d=2$ can be calculated from \eqref{eq:mixed-moment} but ChatGPT~5.5 Pro gives the following shorter proof:

\begin{theorem}[ \cite{LLT} The $d=2$ determinant]\label{thm:d2det}
Let $\lambda\vdash k$ with $\ell(\lambda)\le n$.  Then
\[
  c_k(n,2)=\sum_{\lambda\vdash k,\ \ell(\lambda)\le n}A_\lambda(n)s_\lambda(x),
\]
where
\begin{equation}\label{eq:d2-coeff}
  A_\lambda(n)=2^{k-\binom n2}
  \det\left[\binom{2n-2j+1}{\lambda_i+n-i}\right]_{1\le i,j\le n}.
\end{equation}
\end{theorem}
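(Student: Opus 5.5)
The plan is to write $C_n(x)$ as a ratio of alternants and then expand along Cauchy--Binet.

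\textbf{Step 1 (product as a Schur function).} Factor
\[
\prod_{i<j}(1+x_i+x_j)=\prod_{i<j}\bigl((1+2x_i)+(1+2x_j)\bigr)/2^{\binom n2}.
\]
Substitute $z_i:=1+2x_i$. Then $C_n(x)=2^{-\binom n2}\prod_i z_i\prod_{i<j}(z_i+z_j)$. The classical identity $\prod_{i<j}(z_i+z_j)=s_{\delta}(z)$ with $\delta=(n-1,n-2,\dots,0)$ gives
\[
C_n(x)=2^{-\binom n2}\,\frac{\det\bigl[z_i^{\,2n-2j+1}\bigr]}{\det\bigl[z_i^{\,n-j}\bigr]},
\]
because multiplying $s_\delta(z)$ by $e_n(z)=\prod z_i$ and by the Vandermonde $a_\delta(z)$ produces the alternant with exponents $\delta+\delta+\delta+(1^n)-\delta$, i.e.\ $2(n-j)+1$.

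\textbf{Step 2 (change variables back to $x$).} Expand each entry binomially:
\[
z_i^{\,2n-2j+1}=\sum_{m\ge0}\binom{2n-2j+1}{m}2^m x_i^m.
\]
So the numerator matrix is the product of the $n\times\infty$ matrix $[(2x_i)^m]$ and the $\infty\times n$ matrix $\bigl[\binom{2n-2j+1}{m}\bigr]$. Apply Cauchy--Binet. The sum runs over strictly decreasing column index sets $m_1>\dots>m_n\ge0$, which we write as $m_i=\lambda_i+n-i$. Each term is
\[
\det\bigl[(2x_i)^{\lambda_j+n-j}\bigr]\cdot\det\Bigl[\binom{2n-2j+1}{\lambda_i+n-i}\Bigr].
\]
The first factor equals $2^{|\lambda|+\binom n2}a_{\lambda+\delta}(x)$.

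For the denominator, $z_i-z_j=2(x_i-x_j)$, so $\det[z_i^{n-j}]=2^{\binom n2}a_\delta(x)$. Dividing, the two $2^{\binom n2}$ factors from the alternants cancel, and we obtain
\[
C_n(x)=2^{-\binom n2}\sum_\lambda 2^{|\lambda|}\det\Bigl[\binom{2n-2j+1}{\lambda_i+n-i}\Bigr]s_\lambda(x).
\]

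\textbf{Step 3 (extract degree $k$).} Taking the degree-$k$ part gives exactly \eqref{eq:d2-coeff}. The constraint $\ell(\lambda)\le n$ comes automatically from there being $n$ strictly decreasing indices.

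\textbf{Main obstacle.} The main care point is the bookkeeping of the powers of $2$ and the order of the determinant indices, so that the result matches \eqref{eq:d2-coeff}. Both matrices are indexed $(i,j)$, so a transpose does not change the determinant, and the overall sign of Cauchy--Binet is positive when both factors are taken in decreasing order. I would double-check the final normalization against $n=1$, where $C_1=1+2x_1$ and $A_{(k)}(1)=2^k\binom{1}{k}$, and against $n=2$, $k=1$, where $c_1=3e_1$.
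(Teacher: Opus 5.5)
Your proof is correct and follows essentially the same route as the paper: both write $C_n(x)$ as $2^{-\binom n2}\det\bigl[(1+2x_a)^{2n-2q+1}\bigr]$ divided by a Vandermonde, and both expand the numerator by Cauchy--Binet against the binomial matrix $2^r\binom{2n-2q+1}{r}$. The only difference is cosmetic: the paper identifies the complementary minors as Schur functions via the Jacobi--Trudi matrix $(h_{r-p+1})$ and the Lagrange-interpolation formula for $h_m$, whereas you read them off directly from the bialternant formula $s_\lambda=a_{\lambda+\delta}/a_\delta$, which makes the sign and power-of-$2$ bookkeeping slightly cleaner.
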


\begin{proof}
Let $h_r$ be the complete symmetric functions, with $h_r=0$ for $r<0$.  For a strictly increasing sequence $I=(i_1<\cdots<i_n)$ put
\[
  s_I=\det(h_{i_q-p+1})_{1\le p,q\le n}.
\]
Set $A_{r,q}=2^r\binom{2n-2q+1}{r}$ and $B_{p,r}=h_{r-p+1}$.  By Cauchy--Binet,
\[
  \det(BA)=\sum_I s_I\det(A_I).
\]
Using
\[
  h_m=\sum_{a=1}^n \frac{x_a^{m+n-1}}{\prod_{b\ne a}(x_a-x_b)},
\]
one obtains
\[
  (BA)_{p,q}=\sum_{a=1}^n
  \frac{x_a^{n-p}(1+2x_a)^{2n-2q+1}}{\prod_{b\ne a}(x_a-x_b)}.
\]
Taking determinants,
\[
  \det(BA)=(-1)^{\binom n2}
  \frac{\det((1+2x_a)^{2n-2q+1})_{a,q}}{\prod_{a<b}(x_a-x_b)}.
\]
With $u_a=1+2x_a$,
\[
  \det(u_a^{2n-2q+1})_{a,q}
  =\left(\prod_a u_a\right)\prod_{a<b}(u_a^2-u_b^2).
\]
Since $u_a^2-u_b^2=4(x_a-x_b)(1+x_a+x_b)$, we get
\[
  \det(BA)=(-1)^{\binom n2}2^{n(n-1)}C_n(x).
\]
Reindexing $I$ by partitions gives \eqref{eq:d2-coeff}.
\end{proof}

\begin{example}
For small $k$,
\[
  c_1(n,2)=(n+1)s_1,
\]
\[
  c_2(n,2)=\frac{(n-1)(n+2)}2s_2+\frac{(n+1)(n+2)}2s_{11},
\]
\[
  c_3(n,2)=\frac{(n-2)(n-1)(n+3)}6s_3+
  \frac{(n+2)(n^2+n-3)}3s_{21}
  +\frac{(n+1)(n+2)(n+3)}6s_{111}.
\]
\end{example}

\medskip
\subsection{A $K$-theoretic generalization.}
The following normalized multiplicative generalization
was proved in this project by GPT Pro~5.5.  It is useful to regard it as a $K$-theoretic or finite-difference refinement of Theorem~\ref{thm:strong-binomiality}: the variables $X_i$ are multiplicative Chern-root variables, while the substitution $X_i=1-\zeta u_i$ extracts the additive Chern-root information order by order in $\zeta$.

Let
\[
  M_{n,d}(y,X)=\prod_{\alpha\in A_{n,d}}(1+yX^\alpha),
  \qquad X^\alpha=X_1^{\alpha_1}\cdots X_n^{\alpha_n}.
\]

This is the $K$-theoretic total Chern class, or equivariant motivic Chern class of the representation $\Sym^d(\mathbb C^n)$. Using a substitution we can compare it with the cohomological Chern class:

Define
\[
  \NN_{n,d}(z,\zeta,u):=
  (1+z)^{|A_{n,d}|}
  M_{n,d}\left(-\frac{z}{1+z},\,X_j\mapsto 1-\zeta u_j\right).
\]
Equivalently,
\begin{equation}\label{eq:NN-def}
  \NN_{n,d}(z,\zeta,u)=
  \prod_{\alpha\in A_{n,d}}
  \left(1+z\left(1-\prod_{j=1}^n(1-\zeta u_j)^{\alpha_j}\right)\right).
\end{equation}
Then the cohomological Chern class is the diagonal:
\begin{proposition}[Diagonal recovery of the Chern classes]\label{prop:NN-diagonal}
For every $k\ge0$,
\[
  [z^k\zeta^k]\NN_{n,d}(z,\zeta,u)=c_k(n,d).
\]
\end{proposition}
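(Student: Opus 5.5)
The plan is to expand each factor of \eqref{eq:NN-def} as a power series in $\zeta$ and track bidegrees in $(z,\zeta)$. For each $\alpha\in A_{n,d}$ put
\[
  g_\alpha(\zeta,u):=1-\prod_{j=1}^n(1-\zeta u_j)^{\alpha_j}.
\]
This is a polynomial in $\zeta$ with zero constant term. Its linear coefficient is $\sum_j\alpha_j u_j$, which is $y_\alpha$ evaluated at $x=u$. So we can write
\[
  g_\alpha=\zeta\,y_\alpha(u)+\zeta^2 h_\alpha(\zeta,u)
\]
for some polynomial $h_\alpha$. The key observation is that every monomial in $z\,g_\alpha$ has $\zeta$-degree at least its $z$-degree.

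Next, expand the product $\prod_{\alpha}(1+z g_\alpha)$ by choosing a subset $S\subseteq A_{n,d}$ of factors that contribute their $z g_\alpha$ term. Such a choice contributes $z^{|S|}\prod_{\alpha\in S}g_\alpha$. Since each $g_\alpha$ is divisible by $\zeta$, the product $\prod_{\alpha\in S} g_\alpha$ is divisible by $\zeta^{|S|}$. Its $\zeta^{|S|}$-coefficient is exactly $\prod_{\alpha\in S} y_\alpha(u)$, because any use of a $\zeta^2 h_\alpha$ term raises the $\zeta$-degree beyond $|S|$.

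To extract $[z^k\zeta^k]$, we restrict to $|S|=k$ and take the lowest-order $\zeta$-term. This gives
\[
  [z^k\zeta^k]\NN_{n,d}=\sum_{S\subseteq A_{n,d},\,|S|=k}\ \prod_{\alpha\in S} y_\alpha(u)=e_k\bigl(\{y_\alpha\}_{\alpha\in A_{n,d}}\bigr).
\]
By the definition of $c_k(n,d)$ as the degree-$k$ part of $\prod_\alpha(1+y_\alpha)$, this elementary symmetric function of the roots is $c_k(n,d)$ in the variables $u$ (identified with $x$).

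There is no serious obstacle; the argument is pure bookkeeping. The only point needing care is to justify cleanly that terms from $h_\alpha$ cannot contribute to the diagonal coefficient. One way is a grading argument: give $z$ weight $-1$ and $\zeta$ weight $+1$. Each factor then has only nonnegative weights, and its weight-$0$ part is $1+z\zeta\,y_\alpha(u)$. The weight-$0$ part of the whole product is therefore $\prod_\alpha(1+z\zeta\,y_\alpha(u))$. Reading off the coefficient of $(z\zeta)^k$ gives $c_k(n,d)$.
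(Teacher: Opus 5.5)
Your proof is correct and follows essentially the same argument as the paper: you choose the $z$-term from exactly $k$ factors and note that each chosen factor contributes at least one power of $\zeta$. Hence only the linear parts $\zeta\,y_\alpha(u)$ survive, giving $e_k(y_\alpha)=c_k(n,d)$, and your weight grading (weight $-1$ on $z$, $+1$ on $\zeta$) is a tidy repackaging of that same bookkeeping.
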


\begin{proof}
For each $\alpha\in A_{n,d}$,
\[
  1-\prod_{j=1}^n(1-\zeta u_j)^{\alpha_j}
  =\zeta\sum_{j=1}^n\alpha_j u_j+O(\zeta^2).
\]
To obtain $z^k\zeta^k$, one must choose the $z$-term from exactly $k$ factors in \eqref{eq:NN-def}.  Since each chosen factor contributes at least one power of $\zeta$, the total $\zeta$-degree $k$ forces us to take only the linear part displayed above.  Therefore
\[
  [z^k\zeta^k]\NN_{n,d}
  =\sum_{\substack{S\subseteq A_{n,d}\\ |S|=k}}
  \prod_{\alpha\in S}\left(\sum_j\alpha_j u_j\right),
\]
which is precisely the degree-$k$ homogeneous component of
\[
  \prod_{\alpha\in A_{n,d}}
  \left(1+\sum_j\alpha_j u_j\right).
\]
\end{proof}
 Using this we can see that the following theorem generalizes Theorem \ref{thm:strong-binomiality}:
\begin{theorem}[Binomiality for the $\NN$-analogue]\label{thm:NN-binomiality}
For every fixed pair $q,m\ge0$, the coefficient
\[
  [z^q\zeta^m]\NN_{n,d}(z,\zeta,u)
\]
is a symmetric homogeneous polynomial of degree $m$ in $u_1,\ldots,u_n$.  If
\[
  [z^q\zeta^m]\NN_{n,d}
  =\sum_{\lambda\vdash m}F_{q,m,\lambda}(n,d)e_\lambda(u),
\]
then
\[
  F_{q,m,\lambda}(n,d)\in\Q[T_1(n,d),\ldots,T_m(n,d)].
\]
Moreover,
\[
  [z^q\zeta^m]\NN_{n,d}=0\qquad\text{if }q>m.
\]
\end{theorem}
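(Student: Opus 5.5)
The plan is to mimic the proof of Theorem~\ref{thm:strong-binomiality}, with the Newton-identity step replaced by a logarithm over the multiplicative roots. For each $\alpha\in A_{n,d}$ put
\[
  w_\alpha(\zeta,u):=1-\prod_{j=1}^n(1-\zeta u_j)^{\alpha_j},
\]
so that $\NN_{n,d}=\prod_\alpha(1+zw_\alpha)$.

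First, each $w_\alpha$ is a power series in $\zeta$ with zero constant term. Its $\zeta^m$-coefficient is homogeneous of degree $m$ in $u$. Expanding, one gets
\[
  \prod_j(1-\zeta u_j)^{\alpha_j}=\sum_{\beta}\prod_j\binom{\alpha_j}{\beta_j}(-\zeta u_j)^{\beta_j},
\]
and $\prod_j\binom{\alpha_j}{\beta_j}=\prod_j\alpha_j^{\underline{\beta_j}}/\beta_j!$ is a product of falling factorials. This is exactly the input needed for Lemma~\ref{lem:factorial-moments}.

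Next, take logarithms:
\[
  \log\NN_{n,d}=\sum_{p\ge1}\frac{(-1)^{p-1}z^p}{p}\,W_p,\qquad W_p:=\sum_{\alpha\in A_{n,d}}w_\alpha^p.
\]
These sums converge formally, since $w_\alpha^p=O(\zeta^p)$. The key claim is that the $\zeta^m$-coefficient of $W_p$ lies in $\Z[T_1,\ldots,T_m]\otimes\Lambda_m$.

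To prove the claim, expand
\[
  w_\alpha^p=\sum_{i=0}^p\binom pi(-1)^i\prod_j(1-\zeta u_j)^{i\alpha_j}.
\]
The $\zeta^m$-coefficient of $\prod_j(1-\zeta u_j)^{i\alpha_j}$ is $\sum_{|\beta|=m}\prod_j\binom{i\alpha_j}{\beta_j}(-u_j)^{\beta_j}$. For fixed $i$ and $\beta_j$, the factor $\binom{i\alpha_j}{\beta_j}$ is a polynomial in $\alpha_j$ of degree $\beta_j$. Rewrite it in the falling-factorial basis $\alpha_j^{\underline r}$ with $r\le\beta_j$, with rational coefficients independent of $n$ and $d$. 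Then sum over $\alpha$ using Lemma~\ref{lem:factorial-moments}. A monomial $u^\beta$ of type $\mu\vdash m$ then has coefficient a universal $\Q$-combination of the $T_R$ with $R\le m$, as in \eqref{eq:mixed-moment}. The terms with $r_j=0$ cause no trouble: the lemma applied to only the indices with $r_j\ge1$ gives $T_R$ with $R=\sum r_j$, and $R=0$ would give $\binom{d+n-1}{n-1}=|A_{n,d}|$.

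That last point is the main obstacle: a priori $|A_{n,d}|$ is not among the $T_r$. I expect it to cancel, because $w_\alpha^p$ is divisible by $\zeta^p$ and its coefficients vanish when $\alpha=0$. More precisely, $R=0$ means every $\alpha$-factor is a constant, and the constant part of $\binom{i\alpha_j}{\beta_j}$ as a polynomial in $\alpha_j$ is zero whenever $\beta_j\ge1$. Since $m\ge1$, some $\beta_j\ge1$, so the constant term of each product $\prod_j\binom{i\alpha_j}{\beta_j}$ with $|\beta|=m$ vanishes. Hence only $T_1,\ldots,T_m$ appear, and they appear with coefficients independent of $n$ and $d$ once expressed via $e_\lambda$. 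One must also check that the coefficient really depends only on the type $\mu$, not on $n$. This holds because the lemma's right side depends only on $\sum m_j$, as in the earlier proof.

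Finally, exponentiate. The coefficient $[z^q\zeta^m]\exp\bigl(\sum_p(-1)^{p-1}z^pW_p/p\bigr)$ is a finite $\Q$-polynomial in the coefficients $[\zeta^{m'}]W_p$ with $p\le q$ and $m'\le m$. Each of these lies in $\Q[T_1,\ldots,T_{m'}]\otimes\Lambda_{m'}$, and the degrees in $u$ add up to $m$. This yields symmetry, homogeneity of degree $m$, and $F_{q,m,\lambda}\in\Q[T_1,\ldots,T_m]$.

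The vanishing for $q>m$ is immediate. Each factor $zw_\alpha$ carries at least $\zeta^1$, so $z^q$ forces $\zeta$-degree at least $q$.
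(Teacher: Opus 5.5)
Your proof is correct and is essentially the paper's argument. Newton's identities, which you phrase as $\log$/$\exp$, reduce everything to the power sums $\sum_\alpha A_\alpha^p$; their $\zeta^m u^\beta$-coefficients are expanded in falling factorials of the $\alpha_j$ and summed via Lemma~\ref{lem:factorial-moments}, and the vanishing for $q>m$ comes from the $\zeta$-order of each $A_\alpha$. The differences are minor: you expand $w_\alpha^p$ by the binomial theorem into single factors $\binom{i\alpha_j}{\beta_j}$ rather than products of $p$ binomials, and you explicitly check that the $R=0$ term $|A_{n,d}|=\binom{d+n-1}{n-1}$ never appears, a point the paper leaves implicit.
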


\begin{proof}
For $\alpha\in A_{n,d}$ put
\[
  A_\alpha(\zeta,u):=1-\prod_{j=1}^n(1-\zeta u_j)^{\alpha_j}.
\]
Then \eqref{eq:NN-def} says
\[
  \NN_{n,d}=\prod_{\alpha\in A_{n,d}}(1+zA_\alpha),
\]
and hence
\[
  [z^q]\NN_{n,d}=e_q(A_\alpha:\alpha\in A_{n,d}).
\]
Newton identities express this elementary symmetric function universally in the power sums
\[
  P_s^{\NN}:=\sum_{\alpha\in A_{n,d}}A_\alpha^s,
  \qquad 1\le s\le q.
\]
Now
\[
  A_\alpha(\zeta,u)=
  \sum_{\beta\ne0}(-1)^{|\beta|+1}
  \binom{\alpha}{\beta}\zeta^{|\beta|}u^\beta,
  \qquad
  \binom{\alpha}{\beta}:=\prod_{j=1}^n\binom{\alpha_j}{\beta_j}.
\]
Thus the coefficient of $\zeta^m u^\mu$ in $P_s^{\NN}$ is a finite linear combination of sums of the form
\[
  \sum_{\alpha\in A_{n,d}}
  \prod_j
  \binom{\alpha_j}{\beta_j^{(1)}}
  \binom{\alpha_j}{\beta_j^{(2)}}\cdots
  \binom{\alpha_j}{\beta_j^{(s)}}.
\]
For fixed $j$, the product of binomial coefficients is a polynomial in $\alpha_j$ and can be expanded in the falling-factorial basis.  The total falling-factorial degree is at most $m$.  Applying Lemma~\ref{lem:factorial-moments} shows that every such sum lies in
\[
  \Q[T_1(n,d),\ldots,T_m(n,d)].
\]
Therefore the same is true for every coefficient of each $P_s^{\NN}$, and Newton identities imply the assertion for $[z^q]\NN_{n,d}$.  Extracting $\zeta^m$ proves the coefficient statement.  Finally, each $A_\alpha$ has $\zeta$-order at least one, so a term of $z$-degree $q$ has $\zeta$-order at least $q$.  Hence $[z^q\zeta^m]\NN_{n,d}=0$ for $q>m$.
\end{proof}

\begin{example}[Small $\NN$-coefficients]\label{ex:NN-small}
Write $T_r=T_r(n,d)$.  The first coefficients are
\[
  [z\zeta]\NN_{n,d}=T_1e_1,
\]
\[
  [z\zeta^2]\NN_{n,d}=-T_2e_{1,1}+T_2e_2,
\]
\[
  [z^2\zeta^2]\NN_{n,d}=
  \frac{T_1^2-T_1-2T_2}{2}e_{1,1}+(T_1+T_2)e_2,
\]
\[
  [z\zeta^3]\NN_{n,d}=T_3e_{1,1,1}-2T_3e_{2,1}+T_3e_3,
\]
\begin{align*}
  [z^2\zeta^3]\NN_{n,d}={}&
  (3T_3+2T_2-T_1T_2)e_{1,1,1}\\
  &+(T_1T_2-6T_3-5T_2)e_{2,1}
  +(3T_3+3T_2)e_3,
\end{align*}
\begin{align*}
  [z^3\zeta^3]\NN_{n,d}={}&
  \frac{T_1^3-3T_1^2-6T_1T_2+12T_3+12T_2+2T_1}{6}e_{1,1,1}\\
  &+(T_1^2+T_1T_2-4T_3-5T_2-T_1)e_{2,1}
  +(2T_3+3T_2+T_1)e_3.
\end{align*}

Notice that the coefficient of $[z^k\zeta^k]$ agrees with $c_k(n,d)$ in \ref{ex:small-k-cknd}, as the diagonal recovery Proposition \ref{prop:NN-diagonal} claims.
\end{example}

\subsection{Leading terms and applications}
Formula \eqref{eq:mixed-moment} immediately implies that the $d$ leading coefficients of $c_k(n,d)$ in the elementary symmetric basis are
\[ [e_\lambda]{c_k(n,d)}
  =
  \frac{1}{\prod_{i\ge1}m_i!\,((d-1)!)^{\ell(\lambda)}}
  n^{(d-1)\ell(\lambda)}
  +O\left(n^{(d-1)\ell(\lambda)-1}\right).
\]

This was conjectured in \cite[Conj 3.10]{feher2025polynomiality}. 

The leading terms for the Schur basis were obtained \cite{feher2025polynomiality}. That result led to the generalization of a theorem of Manivel of the degree of hypersurfaces containing a projective  $r$-space. Interestingly, the leading terms for the Schur basis are easier to calculate and their degree is uniform $n|\lambda|$. For the elementary symmetric basis the degrees can be much smaller due to a surprising cancellation.

The result on the leading coefficients of $c_k(n,d)$ in the elementary symmetric basis is more subtle, but we are not aware of any enumerative applications.

\subsection{Euler classes}
The Schur coefficients of the top Chern class or Euler class 

\begin{equation}\label{eq:etop}
  E_{n,d}(x):=c_{\rm top}(n,d)=
  \prod_{\substack{\alpha\in\Z_{\ge0}^n\\|\alpha|=d}}
  (\alpha_1x_1+\cdots+\alpha_nx_n).
\end{equation}
also has enumerative applications as it is explained e.g. in \cite{feher2025polynomiality}.  For $n=2$ let 
\[
c_{\mathrm{top}}(2,d)
=
\sum_{b=0}^{\left\lfloor (d+1)/2\right\rfloor}
\gamma_{d,b}\,
s_{(d+1-b,b)}(x_1,x_2).
\]
be the Schur expansion of the Euler class. Then ChatGPT 5.5 Pro gives

\[
\gamma_{d,b}
=
\sum_{r=b-1}^{d}
(-1)^{r-b}
\binom{r+1}{b}
d^{\,d+1-r}
\left[{d+1\atop d+1-r}\right],
\]

which is somewhat simpler than the formula of \cite{feher2025polynomiality}. This simplifies the formula of \cite{feher2025polynomiality} for the Euler characteristics of Fano varieties of lines of hypersurfaces. Here square brackets denote the unsigned Stirling numbers of the first kind.
For higher $n$ ChatGPT 5.5 Pro did not provide closed formulas.
\bigskip
\begin{remark}
Similar enumerative applications can be given for the $K$-theory Chern class, leading to calculation of Hilbert polynomials and Todd classes of Fano varieties. These applications will be published elsewhere.
\end{remark}

\section{A rank-two binomial refinement}

This section introduces the refined rank-two phenomenon that became the central target of the later AI-assisted proof stage.
We study the $n=2$, rank two case and write $c_k(2,d)$ in the Schur basis:
\[
  c_k(2,d)=\sum_{0\le j\le\lfloor k/2\rfloor}A_{k,j}(d)s_{(k-j,j)}(x_1,x_2).
\]
We then change coordinates once more by expanding each Schur coefficient $A_{k,j}(d)$ in the binomial basis of the degree parameter $d$:
\[
  A_{k,j}(d)=\sum_{r\ge0} B_{k,j,r}\binom{d}{r}.
\]
This change of basis, which was previously unknown in the literature and was motivated by Theorem \ref{thm:strong-binomiality}, revealed positivity and log-concavity patterns that were invisible in the original $d$-power basis.  

\begin{definition}\label{def:binom} We will call a polynomial
\[
  f(d)=\sum_{r\ge0} b_r\binom{d}{r}
\]
\emph{binomially positive} if $b_r\ge0$ for all $r$.  We will call it \emph{binomially log-concave} if the nonzero sequence $(b_r)_r$ has no internal zeroes and is log-concave, that is, $b_r^2\ge b_{r-1}b_{r+1}$.
\end{definition}
The following rank-two conjectures predict that, for each fixed $k$ and $j$, the Schur coefficient $A_{k,j}(d)$ is binomially positive and binomially log-concave as a polynomial in $d$.

\begin{conjecture}[Rank-two binomial positivity]\label{conj:A}
For all $k,j,r$ with $0\le j\le\lfloor k/2\rfloor$, one has
\[
  B_{k,j,r}\ge0.
\]
Equivalently, for each fixed $k$ and $j$, the polynomial $A_{k,j}(d)$ is binomially positive.
\end{conjecture}

\begin{conjecture}[Rank-two binomial log-concavity]\label{conj:B}
For fixed $k$ and $j$, the nonzero sequence $(B_{k,j,r})_r$ is log-concave:
\[
  B_{k,j,r}^2\ge B_{k,j,r-1}B_{k,j,r+1}.
\]
Equivalently, $A_{k,j}(d)$ is binomially log-concave as a polynomial in $d$.
\end{conjecture}

ChatGPT~5.5 Pro proved the first accessible cases, namely binomial positivity for $j=0,1$ and binomial log-concavity for $j=0$, using fixed-point-free permutations with prescribed cycle defect.  The remaining cases were then isolated as the precise problems for the later AlphaEvolve and Co-Mathematician stages.

\subsection{Binomial log-concavity implies ordinary log-concavity}

We first prove that binomial log-concavity implies ordinary log-concavity of the values $A_{k,j}(d)$ in the degree parameter. This is a well-known result, but here we give a proof to keep our study self-contained.

\begin{theorem}[Binomial log-concavity implies ordinary log-concavity]\label{thm:binomial-lc-implies-lc}
Let $f(d)$ be a binomially positive and binomially log-concave function, as in Definition \ref{def:binom}.
Then the nonzero sequence $(f(d))_{d\ge0}$ is log-concave:
\[
  f(d)^2\ge f(d-1)f(d+1)
\]
for all $d$ in its nonzero range.
\end{theorem}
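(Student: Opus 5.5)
The plan is to read $f(d)=\sum_r b_r\binom dr$ as the binomial transform of $(b_r)$ and to use the classical fact that this transform preserves nonnegative log-concave sequences without internal zeros. Concretely, $f(d)=\sum_{r} b_r\binom{d}{r}\cdot 1^{d-r}$ is the $d$-th term of the ``binomial convolution'' of $(b_r)$ with the constant sequence $(1)_{r\ge0}$. This is clearest through exponential generating functions: $\sum_d f(d)t^d/d! = e^t\sum_r b_r t^r/r!$. The constant sequence $1$ is log-concave in the stronger sense of ultra-log-concavity relative to $1/r!$. We will therefore invoke the standard theorem (Liggett; also Walkup; see Stanley's survey \cite{StanleyLogConcavity}): the binomial convolution $c_d=\sum_r\binom dr a_r b_{d-r}$ of two nonnegative log-concave sequences without internal zeros is again log-concave without internal zeros. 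Since the paper says the result is well known, citing it is acceptable, but the self-contained plan below proves the special case we need.

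For the direct argument we use Pascal's rule. It gives $f(d+1)-f(d)=\sum_r b_{r+1}\binom dr$, that is, $\Delta f$ is the binomial transform of the shifted sequence $(b_{r+1})_r$. We then need to show $f(d)^2-f(d-1)f(d+1)\ge0$. Expand it as the double sum
\[
\sum_{r,s} b_rb_s\Bigl[\tbinom dr\tbinom ds-\tbinom{d-1}{r}\tbinom{d+1}{s}\Bigr],
\]
and symmetrize in $(r,s)$ to obtain
\[
\tfrac12\sum_{r,s} b_rb_s\,K_d(r,s),\qquad K_d(r,s)=2\tbinom dr\tbinom ds-\tbinom{d-1}r\tbinom{d+1}s-\tbinom{d+1}r\tbinom{d-1}s .
\]
Next, group the terms along the antidiagonals $r+s=N$, pairing $(r,s)$ with $(r-1,s+1)$. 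This is Abel summation in the direction $r\mapsto r-1,\ s\mapsto s+1$. The claim to verify is that the partial sums of $K_d$ along each antidiagonal, taken from the middle outward, are nonnegative. Then the hypothesis $b_rb_s\ge b_{r-1}b_{s+1}$ for $r\le s$ (which follows from log-concavity with no internal zeros) makes the weighted sum nonnegative.

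The main obstacle is this kernel inequality for the binomial products: for fixed $N$ and $d$, the sum over $|r-s|\ge 2t$ of $K_d(r,s)$ must be nonnegative. I expect it to follow from the Vandermonde-type identities $\sum_{r+s=N}\binom dr\binom ds=\binom{2d}N=\sum_{r+s=N}\binom{d-1}r\binom{d+1}s$, which show the full antidiagonal sum of $K_d$ vanishes. A single sign change of $K_d$ along the antidiagonal, positive near the middle and negative at the tails, would then give the partial-sum positivity. I would prove the sign pattern by computing the ratio
\[
\frac{\binom{d-1}r\binom{d+1}s}{\binom dr\binom ds}=\frac{(d-r)(d+1)}{d\,(d+1-s)}
\]
and checking monotonicity in $r$. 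If this becomes messy, I would fall back on citing Liggett's theorem. Finally, the statement that there are no internal zeros for $f$ on its nonzero range is immediate, because $f(d)>0$ for every $d\ge\min\{r:b_r>0\}$.
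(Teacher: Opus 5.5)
Your proposal is correct, and your self-contained route differs from the paper's. The paper writes $f(d+1)/f(d)=1+\sum_r q_r\,p_d(r)$, where $q_r=b_{r+1}/b_r$ is nonincreasing and $p_d(r)\propto b_r\binom dr$. It then observes that $p_d$ increases in likelihood-ratio order as $d$ grows, since $\binom dr/\binom{d-1}r=d/(d-r)$ increases in $r$. This gives monotone successive ratios almost at once. However, it divides by the $b_r$, so a zero prefix has to be removed by the $\varepsilon$-perturbation and a limit.

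You instead treat $f(d)^2-f(d-1)f(d+1)$ as a quadratic form in $(b_r)$ and Abel-sum along antidiagonals. This is the same device the paper uses later for Theorem~\ref{thm:B1-solved}. It needs no division and handles leading zeros directly, because $b_rb_s\ge b_{r-1}b_{s+1}$ for $r\le s$ holds even when zeros are present.

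The price is the kernel lemma, which you left as an expectation. It is true, but monotonicity in $r$ of the single ratio you wrote does not control the sign of $K_d$; you need the symmetrized sum. Put $a=d-r$ and $b=d-s$, so that $a+b=2d-N$ is fixed. For $r,s\le d$,
\[
  K_d(r,s)=\binom dr\binom ds\Bigl[2-\frac{d+1}{d}\Bigl(\frac{a}{b+1}+\frac{b}{a+1}\Bigr)\Bigr],
\]
and the bracket is nonincreasing in $|a-b|$ because $a\mapsto a/(2d-N+1-a)$ is convex.

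The outermost possibly nonzero position, where $\max(r,s)=d+1$, lies outside this formula and must be checked by hand. There $K_d=-\binom{d-1}{\min(r,s)}\le 0$, and beyond it $K_d=0$. Together with your Vandermonde cancellation, this gives the sign pattern $+\cdots+\,-\cdots-$ and hence nonnegative partial sums from the centre outward.

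Fix one sign slip: the tail sums over $|r-s|\ge 2t$ are nonpositive, not nonnegative. It is the central partial sums that are $\ge 0$.

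Your fallback citation is also legitimate. The statement is exactly the special case of the classical theorem that binomial convolution preserves log-concavity (Walkup; equivalently, Liggett's convolution theorem for ultra-log-concave sequences), with one factor the constant sequence $1$.
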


\begin{proof}
Let the support of $(b_r)_r$ be the interval $[m,M]$.  We first prove the result in the case $m=0$ and $b_r>0$ for $0\le r\le M$.  Put $b_{M+1}=0$ and
\[
  q_r:=\frac{b_{r+1}}{b_r}\qquad(0\le r\le M).
\]
The log-concavity of $(b_r)_r$ is equivalent to
\[
  q_0\ge q_1\ge \cdots \ge q_M.
\]

Write
\[
  f_d:=f(d)=\sum_{r=0}^{M}b_r\binom dr.
\]
By Pascal's identity,
\[
  f_{d+1}=f_d+g_d,
  \qquad
  g_d:=\sum_{r=0}^{M}b_{r+1}\binom dr.
\]
Define a probability distribution on $\{0,\ldots,M\}$ by
\[
  p_d(r):=\frac{b_r\binom dr}{f_d}.
\]
Then
\[
  \frac{g_d}{f_d}
  =
  \sum_{r=0}^{M}q_r p_d(r).
\]
We claim that the right-hand side is nonincreasing in $d$.  Indeed, for $0\le r\le d-1$,
\[
  \frac{p_d(r)}{p_{d-1}(r)}
  =
  \frac{f_{d-1}}{f_d}\frac{\binom dr}{\binom{d-1}{r}}
  =
  \frac{f_{d-1}}{f_d}\frac{d}{d-r},
\]
which is an increasing function of $r$.  Thus $p_d$ is larger than $p_{d-1}$ in the monotone likelihood-ratio order.  Since $q_r$ is nonincreasing in $r$, we obtain
\[
  \sum_{r=0}^{M}q_r p_d(r)
  \le
  \sum_{r=0}^{M}q_r p_{d-1}(r).
\]
Equivalently,
\[
  \frac{g_d}{f_d}\le \frac{g_{d-1}}{f_{d-1}}.
\]
Hence
\[
  \frac{f_{d+1}}{f_d}
  =
  1+\frac{g_d}{f_d}
  \le
  1+\frac{g_{d-1}}{f_{d-1}}
  =
  \frac{f_d}{f_{d-1}},
\]
which is precisely
\[
  f_d^2\ge f_{d-1}f_{d+1}.
\]

It remains to remove the assumption $m=0$.  If the support starts at $m>0$, replace the zero prefix by
\[
  b_r^{(\varepsilon)}:=b_m\varepsilon^{m-r}
  \qquad(0\le r<m),
\]
and set $b_r^{(\varepsilon)}:=b_r$ for $m\le r\le M$.  For sufficiently small $\varepsilon>0$, the ratio sequence
\[
  \frac{b_{r+1}^{(\varepsilon)}}{b_r^{(\varepsilon)}}
\]
is still nonincreasing, so the first part of the proof applies to
\[
  f^{(\varepsilon)}(d)
  :=
  \sum_{r=0}^{M}b_r^{(\varepsilon)}\binom dr.
\]
Thus $(f^{(\varepsilon)}(d))_d$ is log-concave.  Letting $\varepsilon\to0$ gives the desired log-concavity inequalities for $f(d)$, since these inequalities are closed under limits.
\end{proof}

Consequently, Conjectures~\ref{conj:A} and~\ref{conj:B} imply that, for every fixed $k$ and $j$, the sequence
\[
  A_{k,j}(0),A_{k,j}(1),A_{k,j}(2),\ldots
\]
is log-concave after deleting its initial zeroes.

\subsection{Proof of binomial log-concavity for $j=0$}
We start with a closed formula using second order Stirling coefficients. Specialize to $n=2$.  Put $x_1=u$, $x_2=1$, and
\[
  C_{k,d}(u):=c_k(2,d)(u,1)
  =e_k(i+(d-i)u:0\le i\le d).
\]
Since $s_{(k-j,j)}(u,1)=u^j+\cdots+u^{k-j}$,
\begin{equation}\label{eq:A-extraction}
  A_{k,j}(d)=[u^j](1-u)C_{k,d}(u).
\end{equation}
From
\[
  \prod_{i=0}^{d}(X+i+(d-i)u)=
  (1-u)^{d+1}\left(\frac{X+du}{1-u}\right)^{\overline{d+1}},
\]
we get
\begin{equation}\label{eq:Ckd-stirling}
  C_{k,d}(u)=
  \sum_{s=0}^{k}
  \stir{d+1}{d+1-k+s}
  \binom{d+1-k+s}{s}d^s u^s(1-u)^{k-s}.
\end{equation}
Indeed, here $y^{\overline{m}}$ denotes the rising factorial
\[
  y^{\overline{m}}=y(y+1)\cdots(y+m-1),
\]
and
\[
  X+i+(d-i)u
  =
  (1-u)\left(\frac{X+du}{1-u}+i\right).
\]
Thus
\begin{equation}\label{eq:Akj-stirling}
  A_{k,j}(d)=
  \sum_{s=0}^{j}(-1)^{j-s}\binom{k-s+1}{j-s}d^s
  \binom{d+1-k+s}{s}
  \stir{d+1}{d+1-k+s}.
\end{equation}

Let $\Dder(N,K)$ be the number of fixed-point-free permutations of an $N$-element set with defect $K$, i.e. with $N-K$ cycles.  We put $\Dder(N,K)=0$ outside the natural range and $\Dder(0,0)=1$.

\begin{lemma}\label{lem:D-recurrence}
The numbers $\Dder(N,K)$ satisfy
\[
  \Dder(N,K)=(N-1)(\Dder(N-1,K-1)+\Dder(N-2,K-1)).
\]
\end{lemma}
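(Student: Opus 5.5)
We prove the recurrence with the standard bijective decomposition for derangements, keeping track of the number of cycles. Fix $N\ge 2$ and consider a fixed-point-free permutation $\sigma$ of $[N]=\{1,\ldots,N\}$ with $N-K$ cycles. We look at the cycle containing the element $N$. Since $\sigma$ has no fixed points, this cycle has length at least $2$. We split into two cases according to whether its length is exactly $2$ or at least $3$. In each case we remove $N$ and record the data needed to reconstruct $\sigma$.

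\emph{Case 1: $N$ lies in a cycle of length at least $3$.} Delete $N$ from its cycle, i.e.\ replace $\sigma^{-1}(N)\mapsto N\mapsto\sigma(N)$ by $\sigma^{-1}(N)\mapsto\sigma(N)$. The result $\sigma'$ is a permutation of $[N-1]$. Its shortened cycle still has length at least $2$, so $\sigma'$ is again fixed-point-free. It has the same number of cycles, namely $N-K$, on $N-1$ elements, so its defect is $(N-1)-(N-K)=K-1$. Conversely, from $\sigma'$ and a choice of the element $\sigma^{-1}(N)\in[N-1]$, after which $N$ is inserted, we recover $\sigma$. There are $N-1$ such choices, and every insertion produces a cycle of length at least $3$ through $N$. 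This case therefore contributes $(N-1)\Dder(N-1,K-1)$.

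\emph{Case 2: $N$ lies in a $2$-cycle $(a\,N)$ with $a\in[N-1]$.} Remove the whole $2$-cycle. What remains is a fixed-point-free permutation of the $N-2$ elements $[N-1]\setminus\{a\}$, which we relabel order-preservingly as $[N-2]$. It has $N-K-1$ cycles, hence defect $(N-2)-(N-K-1)=K-1$. The choice of $a$ gives $N-1$ options, and the construction is clearly reversible. This case contributes $(N-1)\Dder(N-2,K-1)$.

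Adding the two cases gives the stated identity for $N\ge2$. The only point needing care is the boundary conventions. For $N=2$, Case 2 uses $\Dder(0,0)=1$ and yields $\Dder(2,1)=1$, which is correct. For $N=1$ both sides vanish, since $\Dder(1,K)=0$ and the factor $N-1$ is $0$. All other out-of-range terms are $0$ by the stated convention, and the same conventions make the identity hold trivially when $K\le0$. There is no real obstacle; the step requiring attention is checking that the defect shifts by exactly one in both cases, which is the bookkeeping above.
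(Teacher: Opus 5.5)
Your proof is correct and follows the paper's argument. Both proofs condition on the cycle through a distinguished letter. If that cycle has length at least $3$, you delete the letter, with $N-1$ insertion positions. If it is a transposition, you delete the transposition, with $N-1$ choices of partner. In both cases the defect drops by exactly one. One small caveat: the identity is meant for $N\ge1$, since at $(N,K)=(0,0)$ the convention $\Dder(0,0)=1$ makes the left side $1$ and the right side $0$. So your remark about $K\le0$ holds only in the range $N\ge1$ you had already fixed.
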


\begin{proof}
Look at the cycle containing a distinguished letter.  If it has length at least three, deleting the distinguished letter gives a fixed-point-free permutation on $N-1$ letters with defect $K-1$, and there are $N-1$ insertion positions.  If it lies in a transposition, deleting the transposition gives a fixed-point-free permutation on $N-2$ letters with defect $K-1$, and there are $N-1$ choices for the partner.
\end{proof}

\begin{theorem}[The cases $j=0,1$ of binomial positivity]\label{thm:rank-two-known-A}
For all $k,r$,
\[
  B_{k,0,r}\ge0,\qquad B_{k,1,r}\ge0.
\]
More precisely,
\[
  B_{k,0,r}=\Dder(r,k)+\Dder(r+1,k),
\]
and
\begin{align*}
B_{k,1,r}={}&r(r+2-k)\Dder(r+1,k-1)
+r(3r+2-3k)\Dder(r,k-1)\\
&+(3r^2-3kr-2r+k+1)\Dder(r-1,k-1)\\
&+(r-1)(r-k-1)\Dder(r-2,k-1).
\end{align*}
\end{theorem}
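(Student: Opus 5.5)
The plan is to prove the two explicit formulas. Positivity then follows for free: the binomial coefficients $B_{k,0,r}$ are visibly nonnegative, and the $B_{k,1,r}$ become nonnegative once the four-term expression is regrouped suitably. The starting point is \eqref{eq:Akj-stirling}. For $j=0$ it reads
\[
  A_{k,0}(d)=\stir{d+1}{d+1-k}.
\]
So the first step is to expand the Stirling number $\stir{N}{N-K}$, a polynomial in $N$, in a binomial basis using fixed-point-free permutations. A permutation of $[N]$ with $N-K$ cycles is determined by its set $S$ of non-fixed points together with a fixed-point-free permutation of $S$ having the same defect $K$. This gives
\[
  \stir{N}{N-K}=\sum_{s}\binom{N}{s}\Dder(s,K).
\]
With $N=d+1$, Pascal's rule $\binom{d+1}{s}=\binom ds+\binom d{s-1}$ yields
\[
  A_{k,0}(d)=\sum_r\bigl(\Dder(r,k)+\Dder(r+1,k)\bigr)\binom dr,
\]
which is the claimed formula for $B_{k,0,r}$.

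For $j=1$, formula \eqref{eq:Akj-stirling} gives
\[
  A_{k,1}(d)=-(k+1)\stir{d+1}{d+1-k}+d(d+2-k)\stir{d+1}{d+2-k}.
\]
I will substitute the same combinatorial expansion, with $K=k$ in the first term and $K=k-1$ in the second. The factor $d(d+2-k)$ is absorbed using $d\binom ds=(s+1)\binom d{s+1}+s\binom ds$, applied twice. This turns the second term into a combination of $\binom dr$ with coefficients polynomial in $r$ and $k$ times $\Dder(\cdot,k-1)$, with arguments shifted by at most two.

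The first term involves $\Dder(\cdot,k)$. I will rewrite it through Lemma~\ref{lem:D-recurrence}:
\[
  \Dder(N,k)=(N-1)\bigl(\Dder(N-1,k-1)+\Dder(N-2,k-1)\bigr).
\]
After this, everything is expressed in $\Dder(r+1,k-1)$, $\Dder(r,k-1)$, $\Dder(r-1,k-1)$ and $\Dder(r-2,k-1)$. Collecting coefficients should reproduce the four polynomials in the statement. This collection is routine but index-sensitive, and I would check it against small $k$ via Example~\ref{ex:small-k-cknd}.

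The main obstacle is deducing $B_{k,1,r}\ge0$ from the formula, because the coefficients $r(r+2-k)$, $3r^2-3kr-\dots$ and $(r-1)(r-k-1)$ can be negative when $r$ is small relative to $k$. Two support facts help. First, $\Dder(N,K)\neq0$ only when $N/2\le K\le N-1$. Second, $\Dder(N,N-1)=(N-1)!$ counts cyclic permutations.

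The approach is then a case split on $r-k$. In the top range, where $r\ge k+1$ makes all coefficients manifestly nonnegative, nothing needs to be done. The boundary cases $r\in\{k,k-1,\dots\}$ are where the negative coefficients appear. There I would use Lemma~\ref{lem:D-recurrence} once more to dominate the negative terms by the positive ones: roughly, $(r-1)\Dder(r-1,k-1)$ and $\Dder(r-2,k-1)$ combine into $\Dder(r,k)$-type quantities that control them. I would pair terms so that each negative contribution is bounded by, for example, $r\,\Dder(r,k-1)\ge (r-1)\bigl(\Dder(r-1,k-1)+\Dder(r-2,k-1)\bigr)$-style inequalities, which come from the recurrence together with monotonicity of $\Dder$ in $N$ for fixed defect.

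If this direct domination fails, I would fall back on a sign-reversing involution on the pairs (subset, fixed-point-free permutation) underlying the expansion, cancelling the $-(k+1)$ term.
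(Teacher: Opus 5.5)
Your derivation of the two formulas follows the paper's route. You expand $\stir{d+1}{d+1-K}$ by the support of non-fixed letters, apply Pascal's rule, absorb $d(d+2-k)$ with $d\binom ds=s\binom ds+(s+1)\binom d{s+1}$, and rewrite $\Dder(r,k)$ and $\Dder(r+1,k)$ via Lemma~\ref{lem:D-recurrence}. Carried out, this gives exactly $r(r+2-k)$, $r(3r+2-3k)$, $3r^2-3kr-2r+k+1$ and $(r-1)(r-k-1)$.

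The gap is in the positivity step for $j=1$. You treat the negative coefficients as a real obstacle and propose to dominate them using ``monotonicity of $\Dder$ in $N$ for fixed defect''. That monotonicity is false: $\Dder(\cdot,K)$ is supported on $K+1\le N\le 2K$ and is only unimodal, e.g. $\Dder(4,3)=6<\Dder(5,3)=20>\Dder(6,3)=15>\Dder(7,3)=0$. The involution fallback is left unspecified. So as written, the proposal does not prove $B_{k,1,r}\ge0$.

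The missing observation is that no domination is needed. You only have to check each coefficient where its $\Dder$ factor is nonzero, using the support fact you already stated. For $k\ge2$, $\Dder(r+i,k-1)\ne0$ forces $k\le r+i\le 2k-2$. Hence:
\begin{itemize}
\item $\Dder(r+1,k-1)\ne0$ gives $r\ge k-1$, so $r+2-k\ge1$;
\item $\Dder(r,k-1)\ne0$ gives $r\ge k$, so $3r+2-3k\ge2$;
\item $\Dder(r-1,k-1)\ne0$ gives $r\ge k+1$, so $3r^2-3kr-2r+k+1=3r(r-k)-2r+k+1\ge r+k+1>0$;
\item $\Dder(r-2,k-1)\ne0$ gives $r\ge k+2$, so $(r-1)(r-k-1)>0$.
\end{itemize}
Thus every term of the four-term formula is individually nonnegative. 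At your ``boundary'' values the negative coefficients multiply vanishing factors. For instance, at $r=k$ the coefficients $1-k$ and $-(k-1)$ multiply $\Dder(k-1,k-1)=\Dder(k-2,k-1)=0$, since a fixed-point-free permutation of $N\ge1$ letters has at least one cycle. So there is nothing to dominate. This termwise support check is exactly the paper's argument: ``each active coefficient is nonnegative.''
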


\begin{proof}
For $j=0$, \eqref{eq:Akj-stirling} gives $A_{k,0}(d)=\stir{d+1}{d+1-k}$.  A permutation of $d+1$ letters with $d+1-k$ cycles has a support of non-fixed letters.  If the support has size $N$, the induced fixed-point-free permutation has $N-k$ cycles and is counted by $\Dder(N,k)$.  Hence
\[
  \stir{d+1}{d+1-k}=\sum_N\Dder(N,k)\binom{d+1}{N}.
\]
Since $\binom{d+1}{N}=\binom dN+\binom d{N-1}$, this gives the displayed formula for $B_{k,0,r}$.

For $j=1$, \eqref{eq:Akj-stirling} gives
\[
  A_{k,1}(d)=d(d+2-k)\stir{d+1}{d+2-k}-(k+1)\stir{d+1}{d+1-k}.
\]
Expanding both Stirling numbers by supports of non-fixed letters and using Lemma~\ref{lem:D-recurrence}, one rewrites the coefficient of $\binom dr$ in the displayed formula as the stated expression.  The required binomial-basis rewritings are
\[
  d\binom ds=s\binom ds+(s+1)\binom d{s+1}
\]
and
\[
  d^2\binom ds=s^2\binom ds+(s+1)(2s+1)\binom d{s+1}+(s+1)(s+2)\binom d{s+2}.
\]
Each active coefficient in the displayed formula is nonnegative.  For example, if $\Dder(r,k-1)\ne0$, then $r\ge k$, so $r(3r+2-3k)\ge0$; the other terms are checked similarly.
\end{proof}

\begin{theorem}[The case $j=0$ of binomial log-concavity]\label{thm:rank-two-known-B}
For fixed $k$, the nonzero sequence $(B_{k,0,r})_r$ is log-concave.
\end{theorem}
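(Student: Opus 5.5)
We have $B_{k,0,r}=\Dder(r,k)+\Dder(r+1,k)$ by Theorem~\ref{thm:rank-two-known-A}. The plan is to prove that for fixed $k$ the sequence $r\mapsto\Dder(r,k)$ is the coefficient sequence of a real-rooted polynomial, or at least is a log-concave sequence with no internal zeros. We then show that the sum of two consecutive shifts of it remains log-concave.

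The cleanest route is through generating functions. The exponential generating function of fixed-point-free permutations, with a variable $t$ marking the defect (number of letters minus number of cycles), is
\[
  \sum_{N,K}\Dder(N,K)\frac{x^N}{N!}t^K=\exp\Bigl(\tfrac1t\bigl(-\log(1-xt)-xt\bigr)\Bigr),
\]
since a cycle of length $\ell\ge2$ contributes defect $\ell-1$. For fixed defect $k$ we have $N\le 2k$, because every cycle has length at least $2$, and $N\ge k+1$ when $k\ge1$. So the relevant sequence lives on $k+1\le r\le 2k$.

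Equivalently, $\Dder(N,k)$ is the associated Stirling number of the first kind, counting permutations of $N$ letters with $N-k$ cycles, all of length at least $2$. We use the standard fact that, for fixed $k$, the sequence of associated (2-)Stirling numbers of the first kind in $N$ has a real-rooted generating polynomial. This should be provable from the recurrence of Lemma~\ref{lem:D-recurrence}, in the spirit of Gessel--Stanley and Brenti for Stirling permutations.

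Concretely, set
\[
  D_k(y)=\sum_N\Dder(N,k)\,y^N .
\]
Lemma~\ref{lem:D-recurrence} translates into an operator recurrence of the form $D_k=\mathcal{T}D_{k-1}$, where $\mathcal{T}$ involves $y\frac{d}{dy}$ and multiplication by $y$ and $y^2$. The key step is to show by induction on $k$ that $D_k$ has only real nonpositive zeros and that its zeros interlace those of $D_{k-1}$. The standard tool is the Hermite--Biehler/Obreschkoff interlacing argument: one checks the sign of $\mathcal{T}D_{k-1}$ at the zeros of $D_{k-1}$.

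If the recurrence does not directly have this compatible form, a fallback is available. Use the known identity $\Dder(N,k)=\sum_i(-1)^i\binom Ni\stir{N-i}{N-i-k}$, or pass to the Ward-number side via the generating function in $x$. Alternatively, prove log-concavity of $(\Dder(r,k))_r$ directly by induction from Lemma~\ref{lem:D-recurrence}, using the classical lemma that the positive combination $(N-1)(a_{N-1}+a_{N-2})$ preserves log-concavity plus a ratio-monotonicity condition.

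Once $D_k(y)$ is real-rooted with nonnegative coefficients, the generating polynomial of the sequence $B_{k,0,r}$ is
\[
  \sum_r B_{k,0,r}\,y^r=\Bigl(1+\tfrac1y\Bigr)D_k(y),
\]
up to a power of $y$. Multiplying by $(1+y)$ preserves real-rootedness with nonnegative coefficients. By Newton's inequalities the coefficient sequence is then log-concave with no internal zeros, which is exactly the claim. Even without real-rootedness, the convolution of the log-concave sequence $(1,1)$ with a log-concave sequence without internal zeros is log-concave, by Hoggar's theorem. So the final step only needs log-concavity of $(\Dder(r,k))_r$.

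The main obstacle is the real-rootedness (or log-concavity) of $D_k(y)$ in $N$ for fixed defect $k$. The recurrence in Lemma~\ref{lem:D-recurrence} mixes two shifts in $N$ with the weight $N-1$, and one has to verify an interlacing-preserving property. I would first try to write $\mathcal{T}$ as $y^2(1+y)\frac{d}{dy}$ applied to a suitable normalization and check the interlacing by hand. Failing that, I would cite the known real-rootedness of associated Stirling polynomials of the first kind.
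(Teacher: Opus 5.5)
Your architecture is the same as the paper's. You prove real-rootedness of $D_k(y)=\sum_N\Dder(N,k)y^N$ (the paper's $P_k$), then use $\sum_rB_{k,0,r}y^r=\frac{1+y}{y}D_k(y)$ and Newton's inequalities, with your Hoggar-type convolution as an alternative finish. The gap is that the only step with real content is never carried out. You call real-rootedness of $D_k$ ``the main obstacle'' and leave a menu of options: an interlacing check you do not perform, an appeal to the literature with no precise reference, and a log-concavity induction with an unspecified ``ratio-monotonicity condition''. As written, nothing about $D_k$ is actually established, so the proof is incomplete at its key step.

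The missing computation is short. Lemma~\ref{lem:D-recurrence} says that the coefficient of $y^N$ in $D_k$ is $(N-1)$ times the coefficient of $y^{N-1}$ in $(1+y)D_{k-1}$. That is,
\[
  D_k(y)=y^2\frac{d}{dy}\bigl((1+y)D_{k-1}(y)\bigr),\qquad D_0=1.
\]
Equivalently, $E_k:=(1+y)D_k$ satisfies $E_k=y^2(1+y)E_{k-1}'$. This confirms your guessed form of $\mathcal T$, and $E_k/y$ is exactly the generating polynomial of $(B_{k,0,r})_r$. No Hermite--Biehler or Obreschkoff interlacing check is needed. If $E_{k-1}$ has only real nonpositive roots, Rolle's theorem gives the same for $E_{k-1}'$, and hence for $E_k$; this is the paper's whole argument.

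Your third fallback also works once made precise, and it needs no extra ratio condition. Write $\Dder(N,k)=(N-1)c_N$ with $c_N=\Dder(N-1,k-1)+\Dder(N-2,k-1)$. The sequence $(c_N)$ is log-concave without internal zeros, being the convolution of $(\Dder(N,k-1))_N$ with $(1,1)$. Termwise multiplication by the nonnegative log-concave sequence $(N-1)_N$ preserves this, because it vanishes only at the endpoint $N=1$. Induction on $k$ then gives log-concavity of $(\Dder(N,k))_N$, which is all your final Hoggar step requires.
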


\begin{proof}
Let $P_k(t)=\sum_N\Dder(N,k)t^N$.  Lemma~\ref{lem:D-recurrence} gives
\[
  P_k(t)=t^2\frac{d}{dt}\bigl((1+t)P_{k-1}(t)\bigr),\qquad P_0(t)=1.
\]
By induction and Rolle's theorem, all roots of $P_k(t)$ are real and nonpositive.  Since
\[
  \sum_rB_{k,0,r}t^r=\frac{1+t}{t}P_k(t),
\]
this generating polynomial also has only real nonpositive roots.  Newton's inequalities imply log-concavity of its nonzero coefficient sequence.
\end{proof}

\begin{remark}
The same real-rootedness mechanism does not prove Conjecture~\ref{conj:B} for $j>0$.  Already for $j=1$ we have
\[
  \sum_rB_{2,1,r}t^r=t+4t^2+6t^3+3t^4=t(1+4t+6t^2+3t^3),
\]
and the cubic factor has non-real roots.
\end{remark}

The first values for Conjecture~\ref{conj:A} are:
\begin{align*}
A_{4,2}(d)={}&27\binom d3+212\binom d4+660\binom d5+1000\binom d6+735\binom d7+210\binom d8,\\
A_{5,2}(d)={}&320\binom d4+3375\binom d5+14010\binom d6+29575\binom d7
+33740\binom d8+19845\binom d9+4725\binom d{10},\\
A_{6,2}(d)={}&3250\binom d5+44004\binom d6+233604\binom d7+646492\binom d8
+1022217\binom d9\\
&+932400\binom d{10}+457380\binom d{11}+93555\binom d{12}.
\end{align*}

\section{An AlphaEvolve detective story for Conjecture \ref{conj:A}}
\label{sec:detective}

This section reports a series of AlphaEvolve experiments in search for approximate identities and leading terms for the coefficients $B_{k,2,r}$. These led to the proof of
Conjecture~\ref{conj:A} for $j=2$ and Conjecture~\ref{conj:B} for $j=1$. The role of AlphaEvolve was not to prove a
statement, but to propose candidate recurrences for the exact integer table
of coefficients $B_{k,2,r}$.  The displayed approximate identities below
are therefore not used as formal arguments.  They are included because they
explain how the correct mathematical language was found: the final proof in
Section~\ref{sec:detective-proofs} is a binomial-basis induction involving
lower modes and multiplication by linear factors.

We use the reindexed array
\[
  M_{k,m}:=B_{k,2,2k-m},\qquad r=2k-m.
\]
Thus $m=0$ corresponds to the top edge $r=2k$, and increasing $m$
moves downward in the binomial-basis degree $r$.  All arrays are extended
by zero outside their natural supports.  We also keep the defect-derangement
notation $\Dder(N,K)$ introduced above.

In this section, the symbol $\approx$ has only an experimental meaning:
the proposed formula gave a good fit on finite rectangles of exactly
computed values, but it is not asserted to be an identity.

\subsection{A first local ansatz.}
The first useful pattern was that $M_{k,m}$ should be determined mainly
from its two nearest predecessors in the previous $k$-row:
\[
  M_{k-1,m-1},\qquad M_{k-1,m}.
\]
The simplest successful ansatz had the form
\[
  M_{k,m}
  \approx
  Q_1(k,m)M_{k-1,m-1}
  +
  Q_2(k,m)M_{k-1,m}
  +
  \text{a short }\Dder\text{-source}.
\]
One compact candidate was
\begin{equation}\label{eq:oracle-fed-merged}
\begin{aligned}
M_{k,m} \approx {}&
m\,M_{k-1,m-1}+(2k-m-1)M_{k-1,m}  \\
&+2\Dder(r,k)
+4k\,\Dder(r-1,k-1)
+3k(k-1)\Dder(r-2,k-2)  \\
&+\frac{4}{3}k(k-1)(k-2)\Dder(r-3,k-3),
\end{aligned}
\end{equation}
where $r=2k-m$.  The source term in \eqref{eq:oracle-fed-merged} is not
the right one, but the recurrence revealed an important feature: the
dependence on $k$ should be local in the $(k,m)$-lattice.

\subsection{A symmetric residual.}
The next useful observation came from subtracting a simple two-neighbour
operator.  Define the operator
\[
  W(F)(k,m):=(2k-m-1)\bigl(F(k-1,m)+F(k-1,m-1)\bigr),
\]
and set
\[
  E_{k,m}:=(I-W)M_{k,m}.
\]
The residual $E$ appeared to satisfy a simpler recurrence:
\begin{equation}\label{eq:ward-defect-merged}
  E_{k,m}
  \approx
  (2k-m-1)E_{k-1,m}
  +(2k-m-1)E_{k-1,m-1}
  +m\,\Dder(r+1,k).
\end{equation}
Equivalently, this suggests the schematic second-order relation
\[
  (I-W)^2M_{k,m}\approx m\,\Dder(r+1,k).
\]
This relation is still not exact.  Its value was conceptual: after removing
a local homogeneous part, the remaining defect became much simpler.  This
suggested that the true recurrence should be found by choosing the correct
homogeneous operator and then studying its residual.

\subsection{Correcting the local structure}
The next step was to correct the coefficients of the two-neighbour
homogeneous part.  The more accurate local backbone was
\begin{equation}\label{eq:corrected-backbone-merged}
  M_{k,m}
  \approx
  (2k-m)M_{k-1,m-1}
  +(2k-m-1)M_{k-1,m}
  +\operatorname{source}(k,m).
\end{equation}
With a source written directly in defect-derangement terms, the best compact
candidate was
\begin{equation}\label{eq:corrected-source-merged}
\begin{aligned}
M_{k,m} \approx {}&
(2k-m)M_{k-1,m-1}
+(2k-m-1)M_{k-1,m}  \\
&+(k-m+1)\Dder(r,k)
+(k+m-1)\Dder(r+1,k)  \\
&-(k+m-1)\Dder(r,k-1)
+(km+1)\Dder(r+1,k-1).
\end{aligned}
\end{equation}
In the original $(k,r)$-coordinates this becomes
\begin{equation}\label{eq:corrected-source-r-merged}
\begin{aligned}
B_{k,2,r}\approx{}&
rB_{k-1,2,r-1}
+(r-1)B_{k-1,2,r-2}  \\
&+(r-k+1)\Dder(r,k)
+(3k-r-1)\Dder(r+1,k)  \\
&-(3k-r-1)\Dder(r,k-1)
+\bigl(k(2k-r)+1\bigr)\Dder(r+1,k-1).
\end{aligned}
\end{equation}
Again, \eqref{eq:corrected-source-r-merged} is not an exact formula.
However, it correctly identified the homogeneous part.  In particular, the
terms
\[
  rB_{k-1,2,r-1}
  +(r-1)B_{k-1,2,r-2}
\]
are precisely the binomial-basis coefficients obtained by multiplying the
previous row by a linear factor.  This was the main structural improvement.

\subsection{Passing to lower modes.}
The direct $\Dder$-description was too rigid.  The next change was to
work with the lower Schur modes themselves:
\[
  a_{n,r}:=B_{n,0,r},\qquad
  b_{n,r}:=B_{n,1,r},\qquad
  c_{n,r}:=B_{n,2,r}.
\]
This is natural because the positivity of the $a$- and $b$-rows had
already been proved in Theorem~\ref{thm:rank-two-known-A}.

For a sequence $f=(f_r)_{r\ge0}$, define
\[
  (T_\alpha f)_r:=(r-\alpha)f_r+r f_{r-1},
  \qquad f_{-1}:=0.
\]
This operator has a simple binomial-basis meaning.  If
\[
  F(x)=\sum_{r\ge0}f_r\binom{x}{r},
\]
then
\[
  (x-\alpha)F(x)
  =
  \sum_{r\ge0}(T_\alpha f)_r\binom{x}{r}.
\]
Thus the operators $T_\alpha$ encode multiplication by linear factors in
the binomial basis.

The most informative lower-mode candidate was
\begin{equation}\label{eq:operator-library-merged}
  c_{k,\bullet+1}
  \approx
  T_{-1}(c_{k-1,\bullet})
  +2T_{-1}(b_{k-1,\bullet})
  -T_{k-2}(b_{k-1,\bullet})
  +T_1(b_{k-2,\bullet}),
\end{equation}
where $c_{k,\bullet+1}$ denotes the shifted sequence
$r\mapsto c_{k,r+1}$.  Expanding the operators gives
\begin{equation}\label{eq:operator-library-expanded-merged}
\begin{aligned}
c_{k,r+1}\approx{}&
(r+1)c_{k-1,r}
+r c_{k-1,r-1}  \\
&+(r+k)b_{k-1,r}
+r b_{k-1,r-1}  \\
&+(r-1)b_{k-2,r}
+r b_{k-2,r-1}.
\end{aligned}
\end{equation}
This formula was not exact, but it was the decisive clue.  It suggested
that the correct recurrence should be written not in terms of raw
$\Dder$-atoms, but in terms of the lower binomial coefficient arrays
$a$, $b$, and $c$, acted on by the elementary operators $T_\alpha$.

There is also a natural reason for the shifted left-hand side
$c_{k,\bullet+1}$.  Since
\[
  \Delta\binom{x}{r+1}=\binom{x}{r},
\]
one has
\[
  \Delta A_{k,2}(x)
  =
  \sum_{r\ge0} c_{k,r+1}\binom{x}{r}.
\]
Thus the search was implicitly pointing toward a recurrence for the forward
difference of $A_{k,2}$, rather than directly for $A_{k,2}$ itself.

\subsection{Conclusion of the AlphaEvolve search.}
The mathematical lesson of the search can be summarized as follows: the positivity of $B_{k,2,r}$ should be proved by a binomial-basis recurrence using lower modes.
In hindsight, this is exactly what happens in the proof of Theorem~\ref{thm:A1-solved}.  The exact identity
\eqref{eq:A12-forward-difference} expresses $\Delta A_{k,2}$ as a sum of
terms involving
\[
  A_{k-1,2},\qquad
  A_{k-1,1},\qquad
  A_{k-2,1},\qquad
  A_{k-2,0},\qquad
  A_{k-3,0},
\]
multiplied by explicit linear or quadratic factors.  In binomial-basis
coordinates, those multiplications are controlled by the elementary
operator $T_\alpha$ and by the binomial shift lemma
(Lemma~\ref{lem:binomial-shift}).  The approximate formulas in this section
therefore did not constitute a proof, but they identified the correct
coordinate system in which the proof becomes elementary.

\section{Proofs of Conjectures \ref{conj:A} and \ref{conj:B} by Co-Mathematician}\label{sec:detective-proofs}
The proof drafts produced by Co-Mathematician transformed the structural information suggested by the AlphaEvolve experiments into rigorous arguments. 

For Conjecture~3.2, the proof is carried out directly in the binomial
basis.  A binomial-shift lemma and an explicit recurrence express the
coefficients in the case $j=2$ in terms of binomial-basis coefficients
from lower Schur modes (i.e coefficients with smaller second part $j$), whose positivity has already been established.

For Conjecture~3.3, the proof uses a more delicate argument.  The
coefficients for $j=1$ are first expressed in terms of numbers of
fixed-point-free permutations with a prescribed number of cycles.  Strong
log-concavity properties of these permutation numbers reduce the desired
inequalities to a finite family of explicit polynomial inequalities, which
are then checked exactly.
\begin{lemma}[Binomial shift lemma]\label{lem:binomial-shift}
Let
\[
  A(x)=\sum_{r\ge I} a_r\binom{x}{r}
\]
be a polynomial with $a_r\ge0$ for all $r$, and suppose that $a_I>0$.  If $I\ge c$, then
\[
  (x-c)A(x)=\sum_{r\ge \max(I,c+1)} b_r\binom{x}{r}
\]
with $b_r\ge0$ for all $r$.
\end{lemma}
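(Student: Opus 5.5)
We will use the binomial-basis operator $T_c$ introduced in Section~\ref{sec:detective}: if $A(x)=\sum_r a_r\binom xr$, then $(x-c)A(x)=\sum_r b_r\binom xr$ with
\[
  b_r=(r-c)a_r+r\,a_{r-1},\qquad a_{I-1}=0.
\]
This identity follows from $x\binom xr=r\binom xr+(r+1)\binom x{r+1}$, and so also $(x-c)\binom xr=(r-c)\binom xr+(r+1)\binom x{r+1}$. Reindexing the second sum gives the stated formula for $b_r$, and the lemma then comes from a sign analysis of this formula.

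\textbf{Nonnegativity.} First take $r>c$ (and $r\ge I$). Then $r-c>0$ and $r\ge 0$, so both terms $(r-c)a_r$ and $r\,a_{r-1}$ are nonnegative, and hence $b_r\ge0$.

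\textbf{Vanishing below $\max(I,c+1)$.} For $r<I$ we have $a_r=0$, and $a_{r-1}=0$ as well, so $b_r=0$. It remains to treat $I\le r\le c$. Since $I\ge c$, this range is empty unless $r=I=c$. In that case $b_c=(c-c)a_c+c\,a_{c-1}=0$, because $a_{c-1}=a_{I-1}=0$. Hence the expansion of $(x-c)A(x)$ starts at $\max(I,c+1)$.

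The only delicate point is the boundary index $r=I=c$. There the potentially negative factor $r-c$ vanishes exactly, and the hypothesis $I\ge c$ is precisely what rules out an index $r$ with $I\le r<c$, where $(r-c)a_r$ would be negative. The hypothesis $a_I>0$ is not needed for nonnegativity. It only ensures that the leading index is meaningful; when $I>c$ one checks $b_I=(I-c)a_I>0$, so the lower bound on the support is sharp.
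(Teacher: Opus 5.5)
Your proof is correct and follows the paper's argument. Both apply the absorption identity to get $b_r=(r-c)a_r+r\,a_{r-1}$, then use $I\ge c$ (tacitly with $c$ an integer, as in the paper) to reduce everything to the boundary case $r=I=c$, where the coefficient vanishes and the support shifts up by one.
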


\begin{proof}
Using the absorption identity
\[
  x\binom{x}{r}=r\binom{x}{r}+(r+1)\binom{x}{r+1},
\]
we obtain
\[
  (x-c)A(x)=a_I(I-c)\binom{x}{I}
  +\sum_{r\ge I+1}\bigl(a_r(r-c)+r\,a_{r-1}\bigr)\binom{x}{r}.
\]
Since $I\ge c$ and $a_I>0$, the leading coefficient $a_I(I-c)$ is nonnegative.  For $r\ge I+1$ one has $r-c\ge1$, so all displayed coefficients are nonnegative.  If $I=c$, then the leading coefficient vanishes and the support shifts up by one.  This proves the claim.
\end{proof}

\begin{theorem}[Conjecture \ref{conj:A} for $j=2$]\label{thm:A1-solved}
For all integers $k\ge4$ and $r\ge0$ one has
\[
  B_{k,2,r}\ge0.
\]
Equivalently, Conjecture~\ref{conj:A} holds for $j=2$.
\end{theorem}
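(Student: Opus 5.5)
The plan is to derive an exact recurrence for the forward difference $\Delta A_{k,2}(x)=A_{k,2}(x+1)-A_{k,2}(x)$ in terms of lower modes, and then read off positivity in the binomial basis. Since $\Delta\binom{x}{r+1}=\binom{x}{r}$, positivity of the coefficients of $\Delta A_{k,2}$ gives $B_{k,2,r+1}\ge0$ for all $r\ge0$. The remaining coefficient $B_{k,2,0}=A_{k,2}(0)$ vanishes because $c_k(2,0)=0$ for $k\ge1$. So the whole theorem reduces to showing that $\Delta A_{k,2}$ is binomially positive.

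To get the recurrence, I would start from the generating function
\[
\sum_k C_{k,d}(u)X^k=\prod_{i=0}^d\bigl(1+X(i+(d-i)u)\bigr).
\]
Passing from $d$ to $d+1$, each factor $1+X(i+(d-i)u)$ becomes $1+X(i+(d+1-i)u)$, which equals the old factor plus $Xu$. There is also one new factor. Comparing $C_{\bullet,d+1}$ with $C_{\bullet,d}$ this way, or working directly with the closed formula \eqref{eq:Akj-stirling} together with the Stirling recurrence $\stir{d+2}{m}=\stir{d+1}{m-1}+(d+1)\stir{d+1}{m}$, I would extract $[u^2](1-u)(\cdot)$. Guided by Section~\ref{sec:detective}, the expected outcome is an identity \eqref{eq:A12-forward-difference} of the shape
\[
\Delta A_{k,2}(x)=\sum_{\text{terms}} L(x)\,A_{k',j'}(x),\qquad (k',j')\in\{(k-1,2),(k-1,1),(k-2,1),(k-2,0),(k-3,0)\},
\]
where each $L$ is a product of one or two linear factors $(x-c)$. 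Concretely, I would write \eqref{eq:Akj-stirling} for $j=2$ at $d$ and $d+1$, apply the Stirling recurrence to every Stirling number at level $d+2$, and then regroup the result using the formulas for $A_{k',0}$, $A_{k',1}$, $A_{k',2}$ at shifted $k$. The linear coefficients are fixed by matching against the experimental clue \eqref{eq:operator-library-merged}.

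With the identity in hand, positivity follows by induction on $k$. The base cases are $k=4,5$, which can be checked against the displayed expansions of $A_{4,2}$ and $A_{5,2}$. The inputs are $A_{k-1,2}$, which is binomially positive by the induction hypothesis, and the $j=0,1$ modes, which are binomially positive by Theorem~\ref{thm:rank-two-known-A}. Each multiplication by a factor $(x-c)$ is handled by Lemma~\ref{lem:binomial-shift}, and this is where support information enters. $A_{k',0}$ and $A_{k',1}$ have lowest binomial index about $k'$: for $j=0$, $\Dder(N,k)\neq0$ forces $N\ge k+1$. So I would check in each term that the lowest support index $I$ is at least the shift $c$. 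Quadratic factors are treated as two successive applications of the lemma, after verifying that the support shifts up appropriately each time.

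The main obstacle is that the recurrence need not come with all-nonnegative combinations. The guessed formula \eqref{eq:operator-library-merged} contains the term $-T_{k-2}(b_{k-1,\bullet})$. If the exact identity has such a negative term, I would first try absorbing it into a positive term: $2T_{-1}b-T_{k-2}b$ multiplies $A_{k-1,1}$ by $2(x+1)-(x-k+2)=x+k$, which is a single nonnegative linear factor in the binomial basis. More generally, I would merge terms that involve the same lower mode before applying the shift lemma. If a negative term still remains after merging, the fallback is to use the explicit $\Dder$-formulas of Theorem~\ref{thm:rank-two-known-A} and Lemma~\ref{lem:D-recurrence} to dominate it coefficientwise by an adjacent positive term.
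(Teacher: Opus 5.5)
Your architecture is the paper's. The comparison of $d$ with $d+1$ that you describe (every old factor shifted by $Xu$, plus one new factor) is exactly the factorization $E_d(T,u)=(T+d)\,E_{d-1}(T+u,u)$ used there. The remaining steps also coincide: forward difference, Lemma~\ref{lem:binomial-shift}, induction on $k$, and $B_{k,2,0}=A_{k,2}(0)=0$.

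The gap is that the identity on which everything rests is never derived. Your stated way of fixing its coefficients, matching against \eqref{eq:operator-library-merged}, cannot work. That formula is explicitly only approximate and has no $A_{k-2,0}$ or $A_{k-3,0}$ terms at all. Its multipliers are also wrong: your merged factor $x+k$ on $A_{k-1,1}$ and the factor $x-1$ on $A_{k-2,1}$ are not the true ones. Regrouping a Stirling-recurrence computation of \eqref{eq:Akj-stirling} likewise has no target until the identity is known.

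You should instead finish the expansion you started. Write $E_{d-1}(T+u,u)=\sum_j C_{j,d-1}(u)(T+u)^{d-j}$, expand $(T+u)^{d-j}$ binomially, compare coefficients of $T^{d+1-k}$, and apply $[u^2](1-u)$. Only the shifts $u^0,u^1,u^2$ survive. The $u^0$-term of the first sum is $A_{k,2}(d-1)$ itself, which produces the forward difference. With $x=d-1$ one gets exactly
\begin{align*}
  \Delta A_{k,2}(x)={}&(x-k+2)A_{k-1,1}(x)+\binom{x-k+3}{2}A_{k-2,0}(x)+(x+1)A_{k-1,2}(x)\\
  &+(x+1)(x-k+3)A_{k-2,1}(x)+(x+1)\binom{x-k+4}{2}A_{k-3,0}(x).
\end{align*}
Every term enters with a plus sign, so your contingency for negative terms is moot. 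Note also that the multiplier of $A_{k-3,0}$ is cubic, not one or two linear factors.

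The exact multipliers matter because the positivity step is tight. Your estimate ``lowest binomial index about $k'$'' must be sharpened to the exact bounds:
\begin{itemize}
\item the lowest index of $A_{n,0}$ is at least $n$, from $B_{n,0,r}=\Dder(r,n)+\Dder(r+1,n)$ and the fact that $\Dder(N,n)\neq0$ forces $N\ge n+1$;
\item the lowest index of $A_{n,1}$ is at least $n-1$, since every term in the formula for $B_{n,1,r}$ needs $r+1\ge n$.
\end{itemize}
With these bounds, four factors sit on the boundary case $I=c$ of Lemma~\ref{lem:binomial-shift}: $x-(k-2)$ on $A_{k-1,1}$, $x-(k-3)$ on $A_{k-2,1}$, $x-(k-3)$ on $A_{k-3,0}$, and $x-(k-2)$ on $A_{k-2,0}$. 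For example, when $k=4$ the factor $x-2$ turns the bottom term $4\binom{x}{2}$ of $A_{3,1}$ into a zero $\binom{x}{2}$-coefficient, whereas $x-3$ would make it negative. The other linear factors have smaller shifts and cause no trouble.

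Finally, rather than taking $k=4,5$ from the displayed tables, which are data rather than proved statements, start the induction at $k=3$. There $A_{3,2}\equiv0$ when read through \eqref{eq:A-extraction}, and this is also the reading of $A_{k-1,2}$ that the identity requires at $k=4$.
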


\begin{proof}
Define the generating polynomial
\[
  E_d(T,u):=\prod_{i=0}^d \bigl(T+i+(d-i)u\bigr)
  =\sum_{k=0}^{d+1} C_{k,d}(u)T^{d+1-k}.
\]
Factoring out the $i=d$ term gives
\[
  E_d(T,u)=(T+d)\,E_{d-1}(T+u,u).
\]
Expanding $E_{d-1}(T+u,u)$ and matching the coefficient of $T^{d+1-k}$ yields
\[
  C_{k,d}(u)=
  \sum_{j=0}^{k}\binom{d-j}{k-j}u^{k-j}C_{j,d-1}(u)
  +d\sum_{j=0}^{k-1}\binom{d-j}{k-1-j}u^{k-1-j}C_{j,d-1}(u).
\]
Multiplying by $1-u$ and extracting the coefficient of $u^2$, we obtain
\begin{align}
A_{k,2}(d)
={}&A_{k,2}(d-1)+(d-k+1)A_{k-1,1}(d-1)+\binom{d-k+2}{2}A_{k-2,0}(d-1) \nonumber\\
&+d\,A_{k-1,2}(d-1)+d(d-k+2)A_{k-2,1}(d-1)
+d\binom{d-k+3}{2}A_{k-3,0}(d-1). \label{eq:A12-recurrence-d}
\end{align}
Now put $x=d-1$ and write $\Delta f(x)=f(x+1)-f(x)$.  Then \eqref{eq:A12-recurrence-d} becomes
\begin{align}
\Delta A_{k,2}(x)
={}&(x-k+2)A_{k-1,1}(x)+\binom{x-k+3}{2}A_{k-2,0}(x) \nonumber\\
&+(x+1)A_{k-1,2}(x)+(x+1)(x-k+3)A_{k-2,1}(x)
+(x+1)\binom{x-k+4}{2}A_{k-3,0}(x). \label{eq:A12-forward-difference}
\end{align}
Since
\[
  \Delta A_{k,2}(x)=\sum_{r\ge0} B_{k,2,r+1}\binom{x}{r},
\]
it suffices to prove that every summand on the right-hand side of \eqref{eq:A12-forward-difference} has nonnegative binomial coefficients.

By Theorem~\ref{thm:rank-two-known-A}, the polynomials $A_{n,0}(x)$ and $A_{n,1}(x)$ have nonnegative binomial coefficients.  Moreover their minimum supports satisfy
\[
  \operatorname{mindeg}_{\binom{\cdot}{r}}A_{n,0}\ge n,
  \qquad
  \operatorname{mindeg}_{\binom{\cdot}{r}}A_{n,1}\ge n-1.
\]
We prove by induction on $k$ that $A_{k,2}(x)$ has nonnegative binomial coefficients and minimum support at least $k-1$.

The base case $k=3$ is trivial, since $A_{3,2}(x)=0$.  Assume the claim for $A_{k-1,2}(x)$.  Then Lemma~\ref{lem:binomial-shift} applies successively to each term on the right-hand side of \eqref{eq:A12-forward-difference}:
\begin{itemize}
\item $(x-(k-2))A_{k-1,1}(x)$ has nonnegative coefficients and minimum support at least $k-1$;
\item $\binom{x-k+3}{2}A_{k-2,0}(x)=\frac12(x-(k-3))(x-(k-2))A_{k-2,0}(x)$ has nonnegative coefficients and minimum support at least $k-1$;
\item $(x+1)A_{k-1,2}(x)$ has nonnegative coefficients and minimum support at least $k-2$;
\item $(x+1)(x-k+3)A_{k-2,1}(x)$ has nonnegative coefficients and minimum support at least $k-2$;
\item $(x+1)\binom{x-k+4}{2}A_{k-3,0}(x)=\frac12(x+1)(x-k+4)(x-k+3)A_{k-3,0}(x)$ has nonnegative coefficients and minimum support at least $k-2$.
\end{itemize}
Hence $\Delta A_{k,2}(x)$ has nonnegative binomial coefficients and minimum support at least $k-2$, so
\[
  B_{k,2,r+1}\ge0\qquad(r\ge0).
\]
Finally,
\[
  B_{k,2,0}=A_{k,2}(0)=0,
\]
which is immediate from \eqref{eq:Akj-stirling}.  Therefore all coefficients $B_{k,2,r}$ are nonnegative, and $A_{k,2}(x)$ has minimum support at least $k-1$.  This closes the induction.
\end{proof}

The proof of Conjecture~\ref{conj:B} for $j=1$ starts from the explicit formula of Theorem~\ref{thm:rank-two-known-A}.  Set
\[
  D_i:=\Dder(r+i,k-1)\qquad(i\in\{-3,-2,-1,0,1,2\}),
\]
and write
\[
  B_{k,1,r}=p_1D_1+p_0D_0+p_{-1}D_{-1}+p_{-2}D_{-2},
\]
where
\begin{align*}
p_1&=r(r+2-k),\\
p_0&=r(3r+2-3k),\\
p_{-1}&=3r^2-3kr-2r+k+1,\\
p_{-2}&=(r-1)(r-k-1).
\end{align*}

\begin{lemma}[Strong log-concavity of $\Dder(\cdot,K)$]\label{lem:strong-log-D}
Fix $K\ge0$.  For all integers $p\le q$ one has
\[
  \Dder(p,K)\Dder(q,K)\ge \Dder(p-1,K)\Dder(q+1,K).
\]
\end{lemma}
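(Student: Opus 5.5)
The plan is to deduce the statement from ordinary log-concavity of $N\mapsto\Dder(N,K)$ together with the fact that its support is an interval. For ordinary log-concavity I will reuse the real-rootedness established in the proof of Theorem~\ref{thm:rank-two-known-B}.

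First I identify the support. A fixed-point-free permutation of $N$ letters with $N-K$ cycles has every cycle of length at least $2$. Hence $N\ge 2(N-K)$, so $N\le 2K$. If $K\ge1$ there is at least one cycle, which forces $N-K\ge1$, so $N\ge K+1$. Conversely, every $N$ with $K+1\le N\le 2K$ is attained, for example by taking $N-K$ cycles whose lengths are $\ge2$ and sum to $N$. So for $K\ge1$ we have $\Dder(N,K)>0$ exactly when $K+1\le N\le 2K$. For $K=0$ the support is $\{0\}$, and the inequality is trivial: whenever the right-hand side is nonzero we would need $p-1=q+1=0$, which contradicts $p\le q$.

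Next I establish ordinary log-concavity. Let $P_K(t)=\sum_N\Dder(N,K)t^N$. By Lemma~\ref{lem:D-recurrence} we have $P_K=t^2\frac{d}{dt}\bigl((1+t)P_{K-1}\bigr)$. As in the proof of Theorem~\ref{thm:rank-two-known-B}, induction and Rolle's theorem give that $P_K$ has only real nonpositive roots. I will make the Rolle step explicit: multiplying by $1+t$ adds the root $-1$, and differentiating interlaces the roots while keeping them in $(-\infty,0]$, with the multiple root at $0$ handled by counting multiplicities. Newton's inequalities then give $\Dder(N,K)^2\ge\Dder(N-1,K)\Dder(N+1,K)$ for all $N$.

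Finally I pass from log-concavity to the strong form. Let $p\le q$. If $p-1$ or $q+1$ lies outside $[K+1,2K]$, the right-hand side vanishes and the inequality holds because every term is nonnegative. Otherwise $K+1\le p-1<p\le q<q+1\le 2K$, so all of $p-1,\dots,q+1$ lie in the support and all the values there are positive. Log-concavity on this positive stretch says the ratios $\rho_N:=\Dder(N+1,K)/\Dder(N,K)$ are nonincreasing in $N$. In particular $\rho_q\le\rho_{p-1}$, which reads
\[
\frac{\Dder(q+1,K)}{\Dder(q,K)}\le\frac{\Dder(p,K)}{\Dder(p-1,K)},
\]
and clearing denominators gives exactly the claimed inequality.

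The only point needing real care is the real-rootedness induction: checking that the roots stay nonpositive and that the zero root of multiplicity $K+1$ does not break the interlacing count. If that becomes delicate, I would instead prove log-concavity directly from the recurrence of Lemma~\ref{lem:D-recurrence} by induction on $K$. That alternative is also routine but messier.
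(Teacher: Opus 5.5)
Your proposal is correct and follows essentially the same route as the paper: real-rootedness of $P_K$ from the proof of Theorem~\ref{thm:rank-two-known-B} gives log-concavity with no internal zeros, hence weakly decreasing ratios on the support, which compared between $p-1$ and $q$ give exactly the claimed inequality. The only difference is that you identify the support $K+1\le N\le 2K$ combinatorially instead of reading off the absence of internal zeros from real-rootedness. This is harmless, and it is the same active support used later in the proof of Theorem~\ref{thm:B1-solved}.
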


\begin{proof}
In the proof of Theorem~\ref{thm:rank-two-known-B} we showed that the generating polynomial
\[
  P_K(t)=\sum_{N}\Dder(N,K)t^N
\]
has only real nonpositive roots.  Hence its coefficient sequence is log-concave and has no internal zeros.  Therefore the ratios
\[
  \frac{\Dder(N,K)}{\Dder(N-1,K)}
\]
form a weakly decreasing sequence on the active support.  Iterating this monotonicity from $p$ to $q$ gives
\[
  \frac{\Dder(p,K)}{\Dder(p-1,K)}
  \ge
  \frac{\Dder(q+1,K)}{\Dder(q,K)},
\]
which is equivalent to the displayed inequality.
\end{proof}

\begin{theorem}[Conjecture~\ref{conj:B} for $j=1$]\label{thm:B1-solved}
For all integers $k\ge2$ and $r\ge0$ one has
\[
  B_{k,1,r}^2\ge B_{k,1,r-1}B_{k,1,r+1}.
\]
Equivalently, Conjecture~\ref{conj:B} holds for $j=1$.
\end{theorem}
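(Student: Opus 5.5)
The plan is to start from the explicit formula $B_{k,1,r}=p_1D_1+p_0D_0+p_{-1}D_{-1}+p_{-2}D_{-2}$ with $D_i=\Dder(r+i,k-1)$. The neighbouring coefficients are then
\[
B_{k,1,r\pm1}=\sum_i p_i(r\pm1)\,\Dder(r\pm1+i,k-1),
\]
so all three quantities are linear forms in the six numbers $D_{-3},\dots,D_2$ with polynomial coefficients in $(k,r)$. Expanding
\[
B_{k,1,r}^2-B_{k,1,r-1}B_{k,1,r+1}
\]
gives a quadratic form $\sum_{a\le b}c_{ab}(k,r)D_aD_b$, where each $c_{ab}$ is an explicit polynomial in $k,r$. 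The goal is to show that this form is nonnegative on all vectors $(D_{-3},\dots,D_2)$ that actually occur, using only the structural constraints on the $D_i$.

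The constraints come from Lemma~\ref{lem:strong-log-D}. Every product $D_aD_b$ with $a+b$ fixed is comparable: if $a\le a'\le b'\le b$ and $a+b=a'+b'$, then $D_{a'}D_{b'}\ge D_aD_b$. I would group the monomials of the quadratic form by the sum $s=a+b\in\{-6,\dots,4\}$. Within each group, the most ``central'' product $D_{\lfloor s/2\rfloor}D_{\lceil s/2\rceil}$ dominates all the others. The reduction is then a transfer procedure. Whenever an off-centre monomial $D_aD_b$ has a negative coefficient, I would bound it by the next more central product of the same sum and absorb it into that coefficient. Carrying this out from the outside in leaves, for each $s$, a single aggregated coefficient $\tilde c_s(k,r)$ on the central product. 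The sufficient condition is that every partial sum of coefficients, taken from the extremes toward the centre, is nonnegative. This is a finite family of explicit polynomial inequalities in $(k,r)$. By symmetry of the construction most of the top-degree terms should cancel, as they must for log-concavity to hold at all.

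Next I would restrict to the active range. Since $\Dder(N,K)=0$ unless $2\cdot$ cycles $\le N$, that is $K+1\le N\le 2K$ with $K=k-1$, only finitely many $r$ (roughly $k-2\le r\le 2k$) are relevant. Near the boundary of the support several $D_i$ vanish, and the form collapses to fewer terms. I would treat the boundary values of $r$ separately, writing $r=k+t$ with $t$ in a bounded window near each end, and check those cases directly. In the interior I would substitute $r=k-1+s$ with $0\le s\le k+1$ and verify that each aggregated partial-sum polynomial is nonnegative on the lattice region. The method is to expand in $s$ and $k-s$, both nonnegative, and show the coefficients are nonnegative, or else to factor. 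Signs of the $p_i$ in the active range are already known from the proof of Theorem~\ref{thm:rank-two-known-A}, which helps.

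The main obstacle is that the naive transfer may be too lossy: some partial sums could be negative for small $k$ or near the edges. In that case I would add a second ingredient, the recurrence of Lemma~\ref{lem:D-recurrence}. It expresses $D_1$ in terms of $D_0,D_{-1}$ at level $K-1$, or equivalently links adjacent levels, and so gives extra linear relations among the monomials. If that still fails for finitely many small $k$, I would verify those cases by direct computation of $B_{k,1,r}$ from Theorem~\ref{thm:rank-two-known-A} and apply the general argument for $k$ beyond an explicit threshold.
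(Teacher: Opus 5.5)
Your overall plan is the paper's plan. You expand $L_{k,r}=B_{k,1,r}^2-B_{k,1,r-1}B_{k,1,r+1}$ as a quadratic form in $D_{-3},\dots,D_2$ and group monomials by trace $a+b$ (only traces $-4,\dots,2$ actually occur). Ordering the products $U_{S,0},U_{S,1},\dots$ of each trace from the centre outward, Lemma~\ref{lem:strong-log-D} gives $U_{S,0}\ge U_{S,1}\ge\cdots\ge0$, and you reduce to finitely many polynomial inequalities.

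\textbf{The reduction criterion is reversed.} On the cone $U_0\ge\cdots\ge U_t\ge0$, one has $\sum_i c_iU_i\ge0$ if and only if every prefix sum taken from the centre outward, $W_i=c_0+\cdots+c_i$, is nonnegative. This is the Abel summation $\sum_i c_iU_i=\sum_i W_i(U_i-U_{i+1})$ (with $U_{t+1}=0$) that the paper uses. It is also what your own transfer step actually produces: pushing only negative running coefficients inward leaves $\min_i W_i$ on the central product.

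Partial sums taken from the extremes inward are not sufficient; take $c_0=-1$, $c_1=2$, $U_0=1$, $U_1=0$. They also fail immediately in this problem. The trace $-4$ slice is
\[
  p_{-2}(r)^2\,D_{-2}^2-p_{-2}(r-1)\,p_{-2}(r+1)\,D_{-3}D_{-1},
\]
and its outer coefficient is strictly negative wherever $D_{-3}D_{-1}\ne0$. By contrast, the centre-out sums are $p_{-2}(r)^2$ and
\[
  p_{-2}(r)^2-p_{-2}(r-1)p_{-2}(r+1)=(r-1)^2+(r-k-1)^2-1\ge0 ,
\]
which is the paper's $W_{-4,1}$. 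Followed literally, your stated criterion would make you conclude the transfer is ``too lossy'' and send you to the recurrence and small-$k$ fallbacks. None of those is needed.

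\textbf{Where each inequality must hold.} Once the direction is fixed, the remaining ingredient is the domain of each inequality. Because $W_{S,i}$ multiplies $U_{S,i}-U_{S,i+1}$, which vanishes unless $D_{m_i}D_{M_i}\ne0$, it suffices to have $W_{S,i}\ge0$ on the pair-specific region $k\le r+m_i$, $r+M_i\le 2k-2$. The substitution $X=r+m_i-k$, $Y=2k-2-(r+M_i)$ maps this region onto $X,Y\ge0$. Each of the seventeen resulting polynomials has nonnegative coefficients (Appendix~\ref{app:prefix}). This replaces your single substitution $r=k-1+s$ with separately treated boundary values, and it gives an argument uniform in $k$ with no threshold and no finite check.
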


\begin{proof}
Set
\[
  L_{k,r}:=B_{k,1,r}^2-B_{k,1,r-1}B_{k,1,r+1}.
\]
Expanding $L_{k,r}$ in the variables $D_{-3},\ldots,D_2$ gives a quadratic form.  Group its terms by the trace $S=a+b$.  For each $S\in\{-4,-3,-2,-1,0,1,2\}$, let
\[
  (m_0,M_0),\ldots,(m_t,M_t)
\]
be the pairs with $m_i\le M_i$, $m_i+M_i=S$, ordered from the center outward:
\begin{align*}
S=-4:&\quad (-2,-2),(-3,-1),\\
S=-3:&\quad (-2,-1),(-3,0),\\
S=-2:&\quad (-1,-1),(-2,0),(-3,1),\\
S=-1:&\quad (-1,0),(-2,1),(-3,2),\\
S=0:&\quad (0,0),(-1,1),(-2,2),\\
S=1:&\quad (0,1),(-1,2),\\
S=2:&\quad (1,1),(0,2).
\end{align*}
Write
\[
  U_{S,i}:=D_{m_i}D_{M_i}.
\]
By Lemma~\ref{lem:strong-log-D}, the sequence $U_{S,0},U_{S,1},\ldots,U_{S,t}$ is weakly decreasing.  Define
\[
  \Delta_{S,i}:=U_{S,i}-U_{S,i+1}\ge0,
\qquad
  U_{S,t+1}:=0.
\]

Let $c_{S,i}(k,r)$ be the coefficient of $U_{S,i}$ in the slice of $L_{k,r}$ with trace $S$.  By discrete Abel summation,
\[
  \sum_{i=0}^{t} c_{S,i}U_{S,i}
  =
  \sum_{i=0}^{t} W_{S,i}\Delta_{S,i},
  \qquad
  W_{S,i}:=\sum_{j=0}^{i} c_{S,j}.
\]
Therefore
\[
  L_{k,r}=\sum_{S=-4}^{2}\sum_i W_{S,i}(k,r)\,\Delta_{S,i}.
\]
It is thus enough to prove that every prefix sum $W_{S,i}(k,r)$ is nonnegative on the active support of $\Delta_{S,i}$.

Fix one of the pairs $(m_i,M_i)$.  If $\Delta_{S,i}\neq0$, then both indices $r+m_i$ and $r+M_i$ lie in the active support of $\Dder(\,\cdot\,,k-1)$, namely
\[
  k\le r+m_i,\qquad r+M_i\le 2k-2.
\]
Introduce new variables
\[
  X:=r+m_i-k\ge0,\qquad Y:=2k-2-(r+M_i)\ge0.
\]
Solving for $k$ and $r$ gives
\[
  k=X+Y+M_i-m_i+2,\qquad
  r=2X+Y+M_i-2m_i+2.
\]
Substituting these expressions into each $W_{S,i}(k,r)$ yields a polynomial $W_{S,i}(X,Y)$ in the nonnegative variables $X$ and $Y$.  The exact list of all seventeen resulting polynomials is recorded in Appendix~\ref{app:prefix}.  Every one of them has nonnegative coefficients.  Hence
\[
  W_{S,i}(X,Y)\ge0
  \qquad
  \text{for all }X,Y\ge0.
\]
Since also $\Delta_{S,i}\ge0$, every term in the Abel decomposition of $L_{k,r}$ is nonnegative.  Therefore $L_{k,r}\ge0$ for all $k,r$, proving Conjecture~\ref{conj:B} for $j=1$.
\end{proof}

\begin{remark}
The proof of Theorem~\ref{thm:B1-solved} is exact but computer-assisted.  The role of the computer algebra calculation is finite and transparent: it expands the seventeen prefix-sum polynomials $W_{S,i}(X,Y)$ and checks that every coefficient is nonnegative.  No numerical approximation is involved.
\end{remark}

\section{Shifted binomial log-concavity for Pl\"ucker coefficients}
\label{sec:plucker-shifted-binomial-lc}

In this section we discuss a related binomial log-concavity phenomenon for the
Pl\"ucker coefficients of coincident root strata. This problem is a harder, stratified version of Theorem \ref{thm:A1-solved} and \ref{thm:B1-solved}.
Let
$\lambda=(2^{e_2},\ldots,m^{e_m})$ be a partition without parts equal
to $1$, and let
\[
  N:=|\lambda|,\qquad M:=|\tilde\lambda|,
\]
where $\tilde\lambda$ is obtained from $\lambda$ by subtracting $1$
from each part.  The equivariant cohomology class 
$[\overline Y_\lambda(d)]$ was introduced in \eqref{plucker} of the Introduction, and it is computed recursively by
Theorem~2.7 of~\cite{feher2025polynomiality}.  In the Schur basis it has
the form
\[
  [\overline Y_\lambda(d)]
  =
  \sum_{\lceil M/2\rceil\le i\le M}
  P_{\lambda;i}(d)\,s_{i,M-i},
  \qquad
  P_{\lambda;i}(d):=\operatorname{Pl}_{\lambda;2i-M}(d).
\]
Thus the admissible indices are precisely $\lceil M/2\rceil\le i\le M$.

The recursive formula is naturally valid at the enumerative threshold
$d\ge N$.  This suggests shifting the degree variable to
\[
  d=N+x.
\]
For each admissible pair $(\lambda,i)$, define the shifted Pl\"ucker
polynomial
\[
  Q_{\lambda;i}(x):=P_{\lambda;i}(N+x),
\]
and expand it in the binomial basis of the shifted variable:
\[
  Q_{\lambda;i}(x)=\sum_{r\ge0} C_{\lambda;i,r}\binom{x}{r}.
\]
Equivalently,
\[
  C_{\lambda;i,r}=\Delta_x^r Q_{\lambda;i}(0),
\]
where $\Delta_x f(x)=f(x+1)-f(x)$ is the forward difference operator.

The ordinary binomial basis in $d$ is not the natural one in this
problem.  For example,
\[
  \operatorname{Pl}_{(3);0}(d)
  =
  6\binom d2-3\binom d1
\]
already has a negative binomial coefficient.  After the shift
$d=N+x$, however, all computed examples exhibit positive and
log-concave binomial-basis coefficients.

\begin{conjecture}[Shifted-binomial log-concavity for Pl\"ucker coefficients]
\label{conj:shifted-plucker-binomial-lc}
For every partition $\lambda$ without parts equal to $1$, and for every
admissible index $i$, the coefficient sequence
\[
  (C_{\lambda;i,0},C_{\lambda;i,1},C_{\lambda;i,2},\ldots)
\]
is nonnegative and log-concave.  Equivalently, $C_{\lambda;i,r}\ge 0$ for all $r$, and
\[
  C_{\lambda;i,r}^2
  \ge
  C_{\lambda;i,r-1}C_{\lambda;i,r+1}
\]
for all relevant $r$, with the convention that $C_{\lambda;i,r}=0$
outside the degree range of $Q_{\lambda;i}$.
\end{conjecture}

By Theorem~\ref{thm:binomial-lc-implies-lc}, binomial positivity together
with binomial log-concavity implies ordinary log-concavity of the values.
Thus Conjecture~\ref{conj:shifted-plucker-binomial-lc} formally implies
the following weaker shifted-value conjecture.

\begin{conjecture}[Shifted Pl\"ucker value log-concavity]
\label{conj:shifted-plucker-value-lc}
For every admissible pair $(\lambda,i)$, the sequence
\[
  Q_{\lambda;i}(0),Q_{\lambda;i}(1),Q_{\lambda;i}(2),\ldots
\]
is log-concave after deleting its initial zeroes.  Equivalently,
\[
  P_{\lambda;i}(N+x)^2
  \ge
  P_{\lambda;i}(N+x-1)P_{\lambda;i}(N+x+1)
\]
for all $x\ge1$ in the nonzero range.
\end{conjecture}

\begin{remark}
 Conjecture~\ref{conj:shifted-plucker-value-lc} is stated separately
because it is the directly visible numerical consequence at the level of
Pl\"ucker values.  The stronger structure is
Conjecture~\ref{conj:shifted-plucker-binomial-lc}: it predicts
log-concavity before the binomial transform is evaluated.
\end{remark}

\begin{example}[Top coefficient]
\label{ex:plucker-top-shifted-binomial}
The top Pl\"ucker coefficient is known explicitly:
\[
  \operatorname{Pl}_{\lambda;M}(d)
  =
  \frac{1}{\prod_{q\ge2} e_q!}\,
  d(d-1)\cdots(d-N+1).
\]
After shifting $d=N+x$, this becomes
\[
  \operatorname{Pl}_{\lambda;M}(N+x)
  =
  \frac{1}{\prod_{q\ge2} e_q!}\,
  (x+1)(x+2)\cdots(x+N).
\]
Therefore
\[
  \operatorname{Pl}_{\lambda;M}(N+x)
  =
  \frac{N!}{\prod_{q\ge2} e_q!}
  \sum_{r=0}^{N}\binom Nr\binom xr.
\]
Thus the shifted-binomial coefficient sequence is a positive multiple of
\[
  \binom N0,\binom N1,\ldots,\binom NN,
\]
and is therefore log-concave.

For instance, for $\lambda=(2,2)$, we have $N=4$, $M=2$, and
\[
  \operatorname{Pl}_{(2,2);2}(d)
  =
  \frac12 d(d-1)(d-2)(d-3).
\]
Hence
\[
  \operatorname{Pl}_{(2,2);2}(4+x)
  =
  12
  +48\binom x1
  +72\binom x2
  +48\binom x3
  +12\binom x4,
\]
with coefficient sequence
\[
  (12,48,72,48,12).
\]
\end{example}

\begin{example}[The bitangent coefficient]
For $\lambda=(2,2)$, the ordinary bitangent polynomial is
\[
  \operatorname{Pl}_{(2,2);0}(d)
  =
  \frac12 d(d-2)(d-3)(d+3).
\]
After shifting by $N=4$,
\[
  \operatorname{Pl}_{(2,2);0}(4+x)
  =
  \frac12(x+4)(x+2)(x+1)(x+7).
\]
Its shifted-binomial expansion is
\[
  \operatorname{Pl}_{(2,2);0}(4+x)
  =
  28
  +92\binom x1
  +112\binom x2
  +60\binom x3
  +12\binom x4.
\]
Thus
\[
  (C_0,C_1,C_2,C_3,C_4)
  =
  (28,92,112,60,12),
\]
and the log-concavity inequalities are
\[
  92^2\ge 28\cdot112,\qquad
  112^2\ge 92\cdot60,\qquad
  60^2\ge 112\cdot12.
\]
\end{example}

\begin{example}[A non-top coefficient for $\lambda=(2,2,2)$]
For $\lambda=(2,2,2)$, one has $N=6$, $M=3$, and
\[
  \operatorname{Pl}_{(2,2,2);1}(d)
  =
  \frac13 d(d-5)(d-4)(d-3)(d^2+3d-2).
\]
After shifting,
\[
  \operatorname{Pl}_{(2,2,2);1}(6+x)
  =
  \frac13(x+6)(x+1)(x+2)(x+3)(x^2+15x+52).
\]
The shifted-binomial expansion is
\[
\begin{aligned}
  \operatorname{Pl}_{(2,2,2);1}(6+x)
  ={}&
  624
  +3184\binom x1
  +6768\binom x2
  +7680\binom x3  \\
  &+4912\binom x4
  +1680\binom x5
  +240\binom x6.
\end{aligned}
\]
Thus
\[
  (C_0,C_1,C_2,C_3,C_4,C_5,C_6)
  =
  (624,3184,6768,7680,4912,1680,240),
\]
which is positive and log-concave.
\end{example}

\subsection{A proof strategy and the current gap}

The following is a revised version of the induction strategy suggested by
the Co-Mathematician proof draft.  It should be read as a proof idea, not
as a complete proof.

Let $m$ be the largest part of $\lambda$, let $\lambda'$ be obtained
by removing one part $m$, and put
\[
  N'=|\lambda'|=N-m.
\]
The recursion of Theorem~2.7 of~\cite{feher2023plucker} is compatible
with the shift $d=N+x$, because
\[
  d-m=N'+x.
\]
Thus the recursion takes the shifted form
\[
  [\overline Y_\lambda(N+x)]
  =
  \frac1{e_m}
  \partial
  \left(
    [\overline Y_{\lambda'}(N'+x)]_{m/(N'+x)}
    \prod_{\ell=0}^{m-1}\bigl(\ell a+(N+x-\ell)b\bigr)
  \right),
\]
where
\[
  \partial(\alpha)(a,b)
  =
  \frac{\alpha(a,b)-\alpha(b,a)}{b-a}.
\]
This gives a natural induction on the length of $\lambda$.  Since
\[
  C_{\lambda;i,r}
  =
  \Delta_x^r Q_{\lambda;i}(0),
\]
the conjecture can be reformulated as a statement about the finite
differences in $x$ of the Schur coefficients produced by this recursion.

A possible route to a proof would be to turn the shifted recursion into a
coefficient-level transition formula of the form
\[
  C_{\lambda;i,r}
  =
  \sum_{i',s}
  K_{\lambda,m}^{i,i'}(r,s)\,
  C_{\lambda';i',s},
\]
where the kernels $K_{\lambda,m}^{i,i'}(r,s)$ are manifestly nonnegative.
If these kernels were known to preserve log-concavity in the $r$-direction,
and if the sums over $i'$ satisfied the necessary compatibility or
interlacing conditions, then the conjecture would follow by induction from
the empty partition.

The main missing point is precisely this kernel statement.  Positivity of
the individual factors
\[
  \prod_{\ell=0}^{m-1}\bigl(\ell a+(N+x-\ell)b\bigr)
\]
after shifting is not enough: the divided difference $\partial$ mixes the
Schur components, and sums of log-concave sequences need not be log-concave
without an additional synchronization condition.  A complete proof would
therefore need a cancellation-free binomial-basis form of the shifted
recursion, together with a total-positivity or interlacing statement strong
enough to imply log-concavity after summing over the intermediate Schur
indices.

There are also two tempting approaches which do not by themselves complete
the proof.  First, one cannot simply prove real-rootedness and appeal to
Newton's inequalities.  For example, the coefficient of $s_{3,1}$ for
$\lambda=(3,3)$ gives
\[
  Q(x)
  =
  \frac12(x+1)(x+2)(x+6)
  \bigl(x^3+27x^2+188x+402\bigr),
\]
and the cubic factor has discriminant
\[
  -96548<0.
\]
Thus $Q(x)$ is not real-rooted.  Moreover, its shifted-binomial
coefficient sequence is
\[
  (2412,10566,19368,19026,10512,3060,360),
\]
whose ordinary generating polynomial factors as
\[
  18(z+1)^2
  \bigl(20z^4+130z^3+304z^2+319z+134\bigr),
\]
and the quartic factor has discriminant
\[
  -975021840<0.
\]
Thus the binomial-coefficient generating polynomial is not generally
real-rooted either.

Second, the divided difference $\partial$ in Theorem~2.7 acts on the
Chern-root variables $a,b$, not on the shifted degree variable $x$.
Therefore $\partial$ should not be confused with the forward difference
$\Delta_x$.  Similarly, the coefficients
\[
  C_{\lambda;i,r}=\Delta_x^r P_{\lambda;i}(N)
\]
are finite differences in an external degree parameter.  At present there
is no direct realization of these numbers as intersection numbers of powers
of a fixed nef divisor, so standard Hodge-theoretic inequalities such as
Khovanskii--Teissier or Lorentzian-polynomial log-concavity do not apply
without further geometric input.

Thus the recursion gives a promising induction scheme, but the essential
new ingredient would be a positive, log-concavity-preserving transition
kernel for the shifted-binomial coefficients.

\subsection{Chern--Schwartz--MacPherson classes of the coincident root strata}\label{subsec:csm-root-strata}

A natural generalization is to study the equivariant
Chern--Schwartz--MacPherson classes of the coincident root strata; see
\cite{juhasz-thesis}.  These classes lead to formulas for Euler
characteristics of varieties of lines with prescribed contact type
$\lambda$ to a degree $d$ hypersurface.  In the case of the empty
partition, this framework recovers the total equivariant Chern class of
$\Pol^d(\mathbb C^2)$.  Thus Pl\"ucker numbers and the total Chern class
$c(\Pol^d(\mathbb C^2))$ are closely related special cases of the same
general theory of equivariant characteristic classes of coincident root
strata.  In both settings one obtains polynomial dependence on $d$, and
the examples suggest stronger positivity and log-concavity after passing to
the appropriate binomial basis.

ChatGPT~5.5 Pro found the following conjecture:

\begin{conjecture}[Shifted binomial log-concavity for stable CSM--Schur coefficients]
Let $\lambda$ be a partition with parts at least $2$, and let
\[
c=\operatorname{codim}Y_\lambda=|\lambda|-\ell(\lambda).
\]
For every $f\ge c$, consider the homogeneous degree $f$ part of the stable
Chern--Schwartz--MacPherson class of the open coincident root stratum
$Y_\lambda(d)$:
\[
c^{SM}(Y_\lambda(d))_f.
\]
Expand it in the two-variable Schur basis as
\[
c^{SM}(Y_\lambda(d))_f
=
\sum_{i=0}^{\lfloor f/2\rfloor}
C_{\lambda,f,i}(d)\,s_{f-i,i}(a,b),
\]
where each $C_{\lambda,f,i}(d)$ is a polynomial in $d$ in the stable range.

Define the signed Schur coefficient polynomial
\[
\widetilde C_{\lambda,f,i}(d)
=
(-1)^{f-c}C_{\lambda,f,i}(d).
\]
Set
\[
y=d-|\lambda|-2(f-c).
\]
Then the shifted polynomial
\[
\widetilde C_{\lambda,f,i}\bigl(y+|\lambda|+2(f-c)\bigr)
\]
has a nonnegative expansion in the binomial basis:
\[
\widetilde C_{\lambda,f,i}\bigl(y+|\lambda|+2(f-c)\bigr)
=
\sum_{j\ge 0}
\gamma_{\lambda,f,i,j}\binom{y}{j},
\qquad
\gamma_{\lambda,f,i,j}\ge 0.
\]
Moreover, after deleting possible initial zeros, the coefficient sequence
\[
\left(\gamma_{\lambda,f,i,0},
\gamma_{\lambda,f,i,1},
\gamma_{\lambda,f,i,2},
\ldots
\right)
\]
is log-concave:
\[
\gamma_{\lambda,f,i,j}^{\,2}
\ge
\gamma_{\lambda,f,i,j-1}\gamma_{\lambda,f,i,j+1}
\qquad
\text{for all } j.
\]
\end{conjecture}

This conjecture should be viewed as a candidate common extension of
Conjecture~\ref{conj:B} and Conjecture~\ref{conj:shifted-plucker-binomial-lc}.
There is, however, an important distinction: the full vector space is naturally a
closure rather than an open stratum.  At present we do not know the correct
log-concavity statement for CSM or ordinary classes of stratum closures.
\appendix
\section{Prefix-sum polynomials in the proof of Theorem~\texorpdfstring{\ref{thm:B1-solved}}{B1}}\label{app:prefix}

For each trace slice $S$ in the proof of Theorem~\ref{thm:B1-solved}, the Abel prefix sums $W_{S,i}(k,r)$ become explicit polynomials in the support variables
\[
  X=r+m_i-k,\qquad Y=2k-2-(r+M_i),
\]
where $(m_i,M_i)$ is the $i$-th pair on the slice.  The following list was produced by exact symbolic expansion.  Every coefficient is nonnegative.

For $S=-4$:
\begin{align*}
W_{-4,0}(X,Y)={}&4X^4+4X^3Y+20X^3+X^2Y^2+14X^2Y+37X^2\\
&+2XY^2+16XY+30X+Y^2+6Y+9,\\
W_{-4,1}(X,Y)={}&5X^2+4XY+28X+Y^2+12Y+39.
\end{align*}

For $S=-3$:
\begin{align*}
W_{-3,0}(X,Y)={}&12X^4+12X^3Y+84X^3+3X^2Y^2+58X^2Y+221X^2\\
&+8XY^2+94XY+258X+6Y^2+52Y+112,\\
W_{-3,1}(X,Y)={}&30X^2+24XY+204X+6Y^2+88Y+344.
\end{align*}

For $S=-2$:
\begin{align*}
W_{-2,0}(X,Y)={}&24X^4+24X^3Y+94X^3+6X^2Y^2+61X^2Y+140X^2\\
&+7XY^2+51XY+94X+2Y^2+14Y+24,\\
W_{-2,1}(X,Y)={}&12X^4+12X^3Y+110X^3+3X^2Y^2+77X^2Y+401X^2\\
&+11XY^2+181XY+696X+15Y^2+168Y+481,\\
W_{-2,2}(X,Y)={}&75X^2+60XY+598X+15Y^2+258Y+1187.
\end{align*}

For $S=-1$:
\begin{align*}
W_{-1,0}(X,Y)={}&32X^4+32X^3Y+196X^3+8X^2Y^2+134X^2Y+442X^2\\
&+18XY^2+174XY+432X+8Y^2+68Y+152,\\
W_{-1,1}(X,Y)={}&4X^4+4X^3Y+46X^3+X^2Y^2+33X^2Y+270X^2\\
&+5XY^2+145XY+832X+20Y^2+272Y+976,\\
W_{-1,2}(X,Y)={}&100X^2+80XY+912X+20Y^2+392Y+2080.
\end{align*}

For $S=0$:
\begin{align*}
W_{0,0}(X,Y)={}&24X^4+24X^3Y+78X^3+6X^2Y^2+53X^2Y+94X^2\\
&+7XY^2+37XY+50X+2Y^2+8Y+10,\\
W_{0,1}(X,Y)={}&12X^4+12X^3Y+102X^3+3X^2Y^2+73X^2Y+339X^2\\
&+11XY^2+151XY+522X+11Y^2+112Y+309,\\
W_{0,2}(X,Y)={}&75X^2+60XY+558X+15Y^2+238Y+1027.
\end{align*}

For $S=1$:
\begin{align*}
W_{1,0}(X,Y)={}&12X^4+12X^3Y+68X^3+3X^2Y^2+50X^2Y+138X^2\\
&+8XY^2+60XY+116X+3Y^2+18Y+32,\\
W_{1,1}(X,Y)={}&30X^2+24XY+172X+6Y^2+72Y+240.
\end{align*}

For $S=2$:
\begin{align*}
W_{2,0}(X,Y)={}&4X^4+4X^3Y+12X^3+X^2Y^2+10X^2Y+13X^2\\
&+2XY^2+8XY+6X+Y^2+2Y+1,\\
W_{2,1}(X,Y)={}&5X^2+4XY+20X+Y^2+8Y+19.
\end{align*}

\bibliographystyle{amsalpha}
\bibliography{refs}

\end{document}